\def\fd{\begin{flushright}$\Box$\end{flushright}}
\newtheorem{corollary}{Corollary}[section]
\newtheorem{defi}[corollary]{Definition}
\newtheorem{example}[corollary]{Example}
\newtheorem{lemma}[corollary]{Lemma}
\newtheorem{prp}[corollary]{Proposition}
\newtheorem{remark}[corollary]{Remark}
\newtheorem{thm}[corollary]{Theorem}
\newfont{\sBlackboard}{msbm10 scaled 900}
\newcommand{\mylabel}[1]{\label{#1}
            \ifx\undefined\stillediting
            \else \fbox{$#1$}\fi }
\newcommand{\BE}{\begin{equation}}
\newcommand{\EEQ}{\end{equation}}
\newcommand{\rfb}[1]{\mbox{\rm
   (\ref{#1})}\ifx\undefined\stillediting\else:\fbox{$#1$}\fi}
\newfont{\Blackboard}{msbm10 scaled 1200}
\newfont{\roma}{cmr10 scaled 1200}
\def\CC{\rm \hbox{C\kern-.56em\raise.4ex
         \hbox{$\scriptscriptstyle |$}\kern+0.5 em }}
\newcommand{\mm}    {{\hbox{\hskip 0.5pt}}}
\newcommand{\bluff} {{\hbox{\raise 15pt \hbox{\mm}}}}
\def\section{\@startsection {section}{1}{\z@}{-3.5ex plus -1ex minus
    -.2ex}{2.3ex plus .2ex}{\large\bf}}
\def\be{\begin{equation}}
\def\ee{\end{equation}}
\title{\textbf { Grushin problems and control theory: Formulation and examples}}
\author{  Akram Ben Aissa\\
\\
\small Département de Mathématiques, Faculté des Sciences de Monastir,\\ \small Université de Monastir, 5019 Monastir, Tunisie\\
\small Unité de recherche: Analyse et Contrôle des Equations aux Dérivées Partielles\\
\small -ACEDP- 05/UR/15-01.\\
\small \texttt{issaakram26@gmail.com}}
\date{}
\begin{document}
\maketitle
 \abstract{}
 In this paper we give a new formulation of an abstract control
 problem in terms of a Grushin problem, so that we will reformulate all
 notions of controllability, observability and stability in a new
 form that gives readers an easy interpretation  of these notions.
\\
\textbf{Key words and phrases:} exact observability, exact
controllability, Schur complement, Grushin problem...\\

\textbf{Mathematics Subject Classification.}\;93B07,\,93B05.

\section{Introduction}
Grushin problem is a simple linear algebraic tool which has proved
itself very useful in the mathematical study of spectral problems
arising in electromagnetism and quantum mechanics.\\
This approach appears constantly under different names and guises in
many works of pure and applied mathematics.\\
The key observation goes back to Schur and his complement formula:\\
If we have for matrices
$$\left[
\begin{array}{cc}
  P & R_- \\
  R_+ & 0 \\
\end{array}
\right]^{-1}=\left[
\begin{array}{cc}
  E & E_+ \\
  E_- & E_{-+} \\
\end{array}
\right],$$ then $P$ is invertible if and only if $E_{-+}$ is
invertible and
$$P^{-1}=E-E_+E^{-1}_{-+}E_-,\;E^{-1}_{-+}=-R_+P^{-1}R_-.$$
This tools was developed by J. Sj\"{o}strand and M. Zworski
\cite{SjZ}, Hager and Sj\"{o}strand \cite{HSj1}, Hellfer and
Sj\"{o}strand \cite{HSj2}.\\
The aim of this paper is to reformulate abstract control problems
studied in control theory by Weiss \cite{G1} and Ammari and Tucsnak
\cite{AT} in a form of Grushin problems and give some regularity
results arising in the two theory.\\
More concisely, let $U,\,X$ be two Hilbert spaces and consider the
abstract control problem
\begin{equation}\label{11}
\left\{
\begin{array}{ll}
\dot{z}(t)=Az(t)+Bu(t),\quad z(0)=z_0\\
y(t)=B^*z(t)\\
\end{array}
\right.
\end{equation}
where $A:D(A)\subset X\longrightarrow X$ generates a
$C_0$-semigroups of contractions $T(t)_{t\geq 0}$,
$B\in\mathcal{L}(U,X)$ is an admissible control operator, $u\in
L^2_{\text{loc}}(0,+\infty; U)$. The transfer function of (\ref{11})
is given by $H(\lambda)\in\mathcal{L}(U)$ such that
$$\hat{y}(\lambda)=H(\lambda)\hat{u}(\lambda),$$
where $\hat{}$ denotes the Laplace transformation . For these
concepts, see \cite{TW}.\\
 Suppose that $H(\lambda)$ is invertible in  $\mathcal{L}(U)$, therefore system (\ref{11})
 can be written as a well-posed Grushin problem as:\\
 for $\lambda\in\rho(A)$
 \begin{equation}\label{22}
 \left\{
 \begin{array}{ll}
 (\lambda-A)u+Bu_-&=v\\
 B^*u&=v_+\\
 \end{array}
 \right..
 \end{equation}
 Thus, (\ref{22}) is well-posed if
 $$\left[
\begin{array}{cc}
  \lambda-A & B \\
  B^* & 0 \\
\end{array}
\right]^{-1}=\left[
\begin{array}{cc}
  E & E+ \\
  E_- & E_{-+} \\
\end{array}
\right],$$ we refer to $E_{-+}$ as the effective Hamiltonien of
$\lambda-A$. We prove that the inverse of the transfer function of
system (\ref{11}) is the effective Hamiltonien of $\lambda-A$ in
(\ref{22}).

 The paper is organized as follows. In the second section we
give some preliminary results dealing to system theory, and we
investigate some spectral properties of transfer function, moreover
 we show how regularity property (in the Weiss sense)  of system (\ref{11}) is stable under iterations of Grushin
 problems. Our main results and statements are given in section $3$. The last
section is devoted to some applications.
\section{Some background}
In this section we gather, for easy reference, some basic facts
about admissible control and observation operators, about well-posed
and regular linear systems, their transfer functions, well-posed
triples of operators and closed-loop systems. For proofs and for
more details we refer to the literature.\\
We assume that $X$  is a Hilbert space and $A : D(A)\longrightarrow
X$ is the generator of a strongly continuous semigroup $\mathbb{T}$
on $X$. We define the Hilbert space $X_1$ as $D(A)$ with the norm
$\|z\|_1 = \|(\beta I-A)z\|$, where $\beta\in \rho(A)$ is fixed
(this norm is equivalent to the graph norm). The Hilbert space
$X_{-1}$ is the completion of $X$ with respect to the norm
$\|z\|_{-1} = \|(\beta I-A)^{-1}z\|$. This space is isomorphic to
$D(A^*)^*$, and we have
$$ X_1\subset X \subset X_{-1},$$
densely and with continuous embeddings. $\mathbb{T}$ extends to a
semigroup on $X_{-1}$, denoted by the same symbol. The generator of
this extended semigroup is an extension of $A$, whose domain is $X$,
so that $A : X\longrightarrow X_{-1}$. We assume that $U$ is a
Hilbert space and $B\in\mathcal{L}(U, X_{-1})$  is an admissible
control operator for $\mathbb{T}$, defined as in Weiss \cite{G3}.
This means that if $z$  is the solution of $z(t) = Az(t) + Bu(t)$,
which is an equation in $X_{-1}$, with $z(0) = z_0\in  X$ and $u \in
L^2(\mathbb{R}_+, U)$, then $z(t)\in  X$ for all $t \geq 0$. In this
case, $z$ is a continuous $X-$valued function of $t$. We have
\begin{equation}
z(t)=\mathbb{T}_t+c(t)u,
\end{equation}
where $c(t) \in \mathcal{L}(L^2(\mathbb{R}_+, U);X)$ is defined by
\begin{equation}\label{opc}
c(t)u=\int_0^t \mathbb{T}_{t-s}Bu(s)ds.
\end{equation}
The above integration is done in $X_{-1}$, but the result is in $X$.
The Laplace transform of $z$ is
$$ \hat{z}(s)=(sI-A)^{-1}[z_0+ B\hat{u}(s)].$$
$B$ is called bounded if $B \in \mathcal{L}(U, X)$ (and unbounded
otherwise). If $B$ is an admissible control operator for
$\mathbb{T}$, then $(sI-A)^{-1}B\in\mathcal{L}(U, X)$ for all $s$
with  $\mathfrak{Re}(s)$  sufficiently large. Moreover, there exist
positive constants $\delta,\,\omega$  such that
$$\|(sI-A)^{-1}B\|_{\mathcal{L}(U, X)}\leq \frac{\delta}{\sqrt{Re
s}},\quad \forall Re s>\,\omega,$$ and if $\mathbb{T} $ is normal
then the last inequality  implies admissibility, see \cite{G1}.\\
We assume that Y is another Hilbert space and $C \in\mathcal{L}(X_1,
Y )$ is an admissible observation operator for $\mathbb{T}$, defined
as in Weiss \cite{G4}. This means that for every $T > 0$  there
exists a $K_T\geq  0$ such that
\begin{equation}
\int_0^T \|C\mathbb{T}_t z_0\|^2dt\leq
K_T^2\|z_0\|^2\quad\forall\,z_0\in D(A).
\end{equation}
$C$ is called bounded if it can be extended such that $C
\in\mathcal{L}(X, Y )$.\\
We regard $L^2_ {loc}(\mathbb{R}_+; Y)$  as a Fréchet space with the
seminorms being the $L^2$ norms on the intervals $[0, n],\;
n\in\mathbb{N}$. Then the admissibility of $C$ means that there is a
continuous operator $\Psi : X \longrightarrow L^2_{loc}([0,\infty),
Y)$ such that
\begin{equation}\label{obser}
(\Psi z_0)(t)=C\mathbb{T}_tz_0\quad \forall\,z_0\in D(A).
\end{equation}
The operator $\Psi$    is completely determined by (\ref{obser}),
because $D(A)$ is dense in $X$. Now we introduce two extensions of
$C$ as following:
\begin{defi} Let $X$ and $Y$ be Hilbert spaces with $\mathbb{T}$ a $C_o$-semigroup on $X$ and
suppose that $C \in\mathcal{L}(X_1, Y)$. Then the Lebesgue extension
of $C$ (with respect to $\mathbb{T}$), $C_L:D(C_L)\longrightarrow Y$
defined by
\begin{equation}\label{lebeg}
C_Lx=\displaystyle{\lim _{t\rightarrow 0}}\,C\frac{1}{t}\int_0^t
\mathbb{T}_sxds
\end{equation}
with $D(C_L)=\{x\in X|\text{the limit in (\ref{lebeg})}\,
\text{exists}\}$.
\end{defi}
Weiss showed in \cite{G4} that $C_L$ is an extension of $C$, in
particular,
$$X_1 \hookrightarrow D(C_L) \hookrightarrow X.$$
The significance of the Lebesgue extension, $C_L$, is that it makes
it possible to give a simple pointwise interpretation of the output
map $(\ref{obser})$ for every $x$ in the original state space $X$.
For every $x_0\in X$, there holds $\mathbb{T}_tx_0\in D(C_L)$ for
almost every $t\geq 0$ and
$$(\Psi x_0)(t)=C_L\mathbb{T}_tx_0.$$
A similar $\Lambda$-extension of $C$  was introduced by Weiss
\cite{G1}
\begin{equation}\label{gamma}
C_{\Lambda} x_0=\displaystyle{\lim _{\lambda\rightarrow
+\infty}}\,C\lambda(\lambda I-A)^{-1}x_0,
\end{equation}
for $\lambda \in \mathbb{C}$ with $\mathfrak{Re}(\lambda)$
sufficiently large and for all $x_0$ in $D(C_\Lambda)=\{x_0\in
X|\,\text{the limit in} \eqref{gamma}\,\text{exists}\}$.
\begin{defi}\label{fonction}
Let $U,\, X,\, Y,\, V \;\text{and}\;W$ be Hilbert spaces such that
$W \subset X \subset V$ and let $B \in\mathcal{L}(U, V)$ and $C
\in\mathcal{L}(W, Y)$ and let $\mathbb{T} = (\mathbb{T}_{t})_{t\geq
0}$ be a $C_0$-semigroup on $X$. Suppose that $B$ is an admissible
control operator for $\mathbb{T}$ with respect to $V$ and that $C$
is an admissible observation operator for $\mathbb{T}$ with respect
to $W$. Then we define the transfer functions of the triple
$(A,B,C)$ to be the solutions, $H:\rho(A)\longrightarrow
\mathcal{L}(U,Y)$ of
\begin{equation}\label{transfer}
\frac{H(s)-H(\beta)}{s-\beta}=-C(sI-A)^{-1}(\beta I-A)^{-1}B
\end{equation}
for $s,\beta\,\in\rho(A),\,s\neq\beta$, where $\rho(A)$ is the
resolvent set of $A$.
\end{defi}
We remark that, since $B$ is an admissible control operator for
$\mathbb{T},\, (\beta I - A)^{-1}B$ is an $\mathcal{L}(U, X)$-valued
analytic function and since $C$ is an admissible observation
operator for $\mathbb{T},\, C(sI-A)^{-1}$ is a $\mathcal{L}(X,
Y)$-valued analytic function. Both $(\beta I-A)^{-1}B$ and
$C(sI-A)^{-1}$  are analytic on some right half-plane
$\mathbb{C}_{\alpha} = \{s\in\mathbb{C} : Re(s)
> \alpha\}$. Consequently the transfer functions always exist as $\mathcal{L}(U,
Y)$-valued functions which are analytic in some
$\mathbb{C}_{\alpha}$. They differ only by an additive constant,
$D\in\mathcal{L}(U, Y)$ (often called feedthrought operator). The
point is that they don't need  necessarily be bounded on any
$\mathbb{C}_{\alpha}$. We impose this as an extra assumption on the
triple $(A, B, C)$  and call this well-posedness.
\begin{defi}
Under the same assumptions as in Definition (\ref{fonction}), we say
that the triple $(A, B, C)$ is well-posed if $B$ is an admissible
control operator for $\mathbb{T}$ with respect to the Hilbert space
$V$, $C$  is an admissible observation operator for $\mathbb{T}$
with respect to the Hilbert space $W$ and its transfer function is
bounded on some half-plane $\mathbb{C}_{\alpha}$.
\end{defi}
\vskip 0.15cm Next, we give some notions of controllability and
observability. For more details, see \cite{TW}.\\
 Let $A:\mathcal{D}(A)\longrightarrow X$ generates a
$C_0$-semigroup $\mathbb{T}_t$ on $X$, $B\in\mathcal{L}(U,X)$, and
$z_0\in X$.
\begin{defi}(Controllability)
The  system  $(A,B)$ is said to be \textbf{exactly controllable} in
time $T>0$ if for every $z_0, z_1\in X$ there exists $u\in L^2(0,T;
U)$ such that the solution of the system (A,B) given by the Duhammel
formula verify $z(T)=z_1$.
\end{defi}
The fact that (A,B) is exactly controllable in  $T>0$ is equivalent
to the fact that the operator $c(t)$ defined by (\ref{opc})   is
surjective, that's
$$\mathrm{Im}\,c(t)=X.$$

\begin{defi}(Observability)
Let $A$ be a generator of $C_0$-semigroup $\mathbb{T}_t$ on $X$, and
$C\in\mathcal{L}(X, U)$. The system  (A, C) is said to be
\textbf{exactly observable} in time $T>0$ if there exists $\delta>0$
such that
\begin{equation}\label{obinequality}
\int_0^T\|C\mathbb{T}_tz\|^2_{U}dt\geq \delta\|z\|^2_{X},\quad
\forall z\in X.
\end{equation}
\end{defi}
\begin{remark}\label{rems}
For every $T>0$, we denote by
\begin{equation}
(\Psi_Tz)(t)=\left\{
\begin{array}{ll}
C\mathbb{T}_tz\quad t\in[0, T]\\
0\quad \qquad t>T.\\
\end{array}\right.
\end{equation}
Since $C$ is bounded, then $\Psi_T\in\mathcal{L}(X, L^2((0,\infty),
U))$ for every $T>0$, and we remark that (A, C) is exactly
observable in time $T>0$ if and only if there exists $\delta>0$ such
that $$\|\Psi_Tz\|_{L^2(0,\infty; U)}\geq \delta\|z\|_X\quad \forall
z\in X.$$
\end{remark}
The following theorem gives the links between these concepts.
\begin{thm}
Let $A$ be a generator of a semigroup on $X$ and $B\in\mathcal{L}(U,
X)$. Then, the following assertions are equivalent :
\begin{enumerate}
 \item $(A, B)$ is exactly controllable on $[0,T]$.
 \item $(A^*, B^*)$ is exactly observable in time $T>0$.
 \end{enumerate}
\end{thm}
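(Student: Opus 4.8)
The plan is to prove the equivalence by a direct duality argument connecting the input map $c(T)$ to its adjoint, which is essentially the output map $\Psi_T$ for the dual system. The starting observation is that exact controllability of $(A,B)$ on $[0,T]$ means, as noted right after the definition, that $\mathrm{Im}\,c(T)=X$, i.e. $c(T)\colon L^2(0,T;U)\to X$ is surjective. A standard functional-analytic fact (closed range / open mapping type result) is that a bounded operator between Hilbert spaces is surjective if and only if its adjoint is bounded below, i.e. there exists $\delta>0$ with $\|c(T)^*z\|\geq\delta\|z\|$ for all $z\in X$. So the whole proof reduces to identifying $c(T)^*$ and recognizing the resulting inequality as exact observability of $(A^*,B^*)$ in the sense of \eqref{obinequality}, using the reformulation in Remark~\ref{rems}.

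**First I would** compute the adjoint of $c(T)$. Starting from \eqref{opc}, for $u\in L^2(0,T;U)$ and $z\in X$,
\[
\langle c(T)u,z\rangle_X=\int_0^T\langle \mathbb{T}_{T-s}Bu(s),z\rangle_X\,ds=\int_0^T\langle u(s),B^*\mathbb{T}_{T-s}^*z\rangle_U\,ds,
\]
so $(c(T)^*z)(s)=B^*\mathbb{T}^*_{T-s}z$ for $s\in[0,T]$. Since $\mathbb{T}^*$ is the $C_0$-semigroup generated by $A^*$, the function $s\mapsto B^*\mathbb{T}^*_{T-s}z$ is, up to the time-reversal $s\mapsto T-s$ (an isometry of $L^2(0,T;U)$), exactly the observation map $\Psi_T$ associated with the pair $(A^*,B^*)$ evaluated at $z$. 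Hence $\|c(T)^*z\|_{L^2(0,T;U)}=\|\Psi_T^{A^*,B^*}z\|_{L^2(0,T;U)}$, and by Remark~\ref{rems} the bound-below inequality $\|c(T)^*z\|\geq\delta\|z\|_X$ is precisely the statement that $(A^*,B^*)$ is exactly observable in time $T$.

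**The main obstacle**, and the only place requiring care, is the functional-analytic lemma that surjectivity of $c(T)$ is equivalent to its adjoint being bounded below. This is not automatic for unbounded-range operators but holds here because $c(T)$ is bounded between Hilbert spaces: surjectivity forces the range to be closed, and then the statement follows from the closed-range theorem together with injectivity of $c(T)^*$ (which is equivalent to density of $\mathrm{Im}\,c(T)$, hence implied by surjectivity). I would state this as a standard lemma and cite \cite{TW}; everything else is the routine adjoint computation and the time-reversal bookkeeping. Putting the two directions together: $(A,B)$ exactly controllable on $[0,T]$ $\iff$ $c(T)$ surjective $\iff$ $c(T)^*$ bounded below $\iff$ $(A^*,B^*)$ exactly observable in time $T$, which closes the proof. $\Box$
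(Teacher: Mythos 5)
Your proposal is correct and follows essentially the same route as the paper: both reduce exact controllability to surjectivity of the input map $c(T)$, invoke the equivalence between surjectivity and the adjoint being bounded below, compute $c(T)^*z$ as the time-reversed output map $s\mapsto B^*\mathbb{T}^*_{T-s}z$ of the dual pair $(A^*,B^*)$, and conclude via the unitarity of the time-reversal operator. Your explicit justification of the surjectivity/bounded-below equivalence via the closed range theorem is a point the paper merely asserts, but it is the same argument.
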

\begin{proof}
\vskip 0.1cm  We set $c(t)u:=\int_0^t\mathbb{T}_{t-s}Bu(s)ds$. Then,
(A, B) is exactly controllable if and only if $\mathrm{Im}\,c(T)=X$,
which is equivalent to saying that $c(T)^*$ is bounded below, i.e.,
there exists $\delta>0$ such that
\begin{equation}\label{1}
\|c(T)^*z\|\geq \delta\|z\|,\quad \forall\:z\in X.
\end{equation}
Compute $c(T)^*$. For all $u\in L^2((0,\infty),U)$ and $z\in X$:
\begin{equation*}
\begin{split}
\langle c(T)u,z\rangle&=\langle\int_0^T\mathbb{T}_{T-s}Bu(s)ds,z\rangle_X\\
&=\int_0^T\langle u(s),B^*\mathbb{T}^*_{T-s}z\rangle_Uds\\
&=\langle u,\Lambda_T\Psi^d_Tz\rangle_{L^2((0,\infty),U)}
\end{split}
\end{equation*}
where $\Lambda_Tu(t):=\left\{
\begin{array}{ll}
u(T-t)\quad t\in [0,T]\\
0\quad t>T\\
\end{array}
\right.$ for all $u\in L^2((0,\infty),U)$.\\\vskip 0.9cm Thus,
$c(T)^*=\Lambda_T\Psi^d_T$,  and inequality (\ref{1}) becomes
$$\|\Psi^d_Tz\|\geq \|\Lambda_T\Psi^d_Tz\|\geq\delta\|z\|\quad\forall\;z\in X$$
since $\Lambda_T$ is un unitary. And therefore, $(A^*,B^*)$ is
exactly observable by Remark \ref{rems}.
\end{proof}
 Now, we introduce the notion of Grushin problem and Schur
Complement.
\begin{defi}
Let $P:H_1\longrightarrow H_2$ be a linear operator where
$H_1,\;H_2$ are two Hilbert spaces. Then, a \textbf{Grushin problem}
for $P$ is a system
\begin{equation}
\left\{
\begin{array}{ll}
Pu+R_-u_-=v\\
R_+u=v_+\\
\end{array}\right.
\end{equation}
where $R_-:H_-\longrightarrow H_2,\;R_+:H_1\longrightarrow
H_+,\,(u,u_-)\in H_1\times H_-$ are unknown and $(v,v_+)\in
H_2\times H_+$ are given. In matrix form we can write
$$\mathcal{P}:=\left[
\begin{array}{ccc}
P&&R_-\\
R_+&&0\\
\end{array}
\right]:H_1\oplus H_-\longrightarrow H_2\oplus H_+.$$ We say that
the Grushin problem is well posed if we have the inverse
$$\mathcal{E}=\left[
\begin{array}{ccc}
E&&E_+\\
E_-&&E_{-+}\\
\end{array}\right]: H_2\oplus H_+\longrightarrow H_1\oplus H_-.$$
In this case we will refer to $E_{-+}$ as \textbf{the effective
Hamiltonian} of $P$.
\end{defi}
 For the concepts of Grushin problems and Schur
complements we refer readers to \cite{G}, \cite{HSj1},\cite{HSj2}.

\section{Reformulation of abstract control problem}
We will connect the theory of well-posed linear system with
well-posed Grushin problems.\\
 Let $A:D(A)\subset X\longrightarrow X $ skew-adjoint and then generates $C_0$-group of isometries  on $X$  and $B\in\mathcal{L}
 (U, X)$ where $U$ is another Hilbert space identified with its dual. Thus, we prove that
\begin{prp}

If the following  abstract control problem with observation (or
equivalently the triple of operators $(A, B, B^*)$)
\begin{equation}\label{cp}
\left\{
\begin{array}{lll}
\dot{z}(t)=Az(t)+Bu(t),\\
y(t)=B^*z(t),\\
z(0)=z_0\\
\end{array}\right.
\end{equation}
 is well-posed with $A$   as above and $B\in\mathcal{L}(U, X)$ with invertible
transfer function $H(\lambda)$  then,  the following Gru\v{s}hin
problem is well-posed:\\
For all $(v,v_+)\in X\times U$, there exists  $(u,u_-)\in X\times U$
solution of
\begin{equation}\label{gp}
\left\{
\begin{array}{lll}
(\lambda I-A)u+Bu_-&=&v\\
\qquad B^*u&=&v_+\\
\end{array}\right.
\end{equation}

and  the effective  Hamiltonien is given by
\begin{equation}\label{he}
E_{-+}(\lambda)^{-1}=-B^*(\lambda I-A)^{-1}B=-H(\lambda).
\end{equation}
 Note that the transfer function is define as $\hat{y}(\lambda)=H(\lambda)\hat{u}(\lambda)$, where $\hat{}$ denotes the Laplace transform with  $z_0=0$.
\end{prp}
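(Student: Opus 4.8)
The plan is to read (\ref{gp}) as a textbook Schur--complement situation and to write down the inverse $\mathcal{E}(\lambda)$ of the Grushin operator $\mathcal{P}(\lambda)=\left[\begin{array}{cc}\lambda I-A&B\\ B^*&0\end{array}\right]$ explicitly. First I fix $\lambda\in\rho(A)$: since $A$ is skew-adjoint, $\sigma(A)$ lies on the imaginary axis, so this holds at least for all $\lambda$ with $\mathfrak{Re}(\lambda)\neq 0$, and then $R(\lambda):=(\lambda I-A)^{-1}\in\mathcal{L}(X)$. Because $B\in\mathcal{L}(U,X)$ and $B^*\in\mathcal{L}(X,U)$ are bounded, the triple $(A,B,B^*)$ is automatically well-posed (so that hypothesis is essentially free here), and Laplace-transforming (\ref{cp}) with $z_0=0$ gives $\hat z(\lambda)=R(\lambda)B\hat u(\lambda)$, hence $\hat y(\lambda)=B^*R(\lambda)B\,\hat u(\lambda)$; thus $H(\lambda)=B^*R(\lambda)B$, with zero feedthrough, which is the normalization of the transfer function used in the statement.

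Next I pivot on the invertible $(1,1)$-block $\lambda I-A$. The Schur complement of $\mathcal{P}(\lambda)$ relative to it is
$$S(\lambda)=0-B^*R(\lambda)B=-H(\lambda),$$
which, by the hypothesis that $H(\lambda)$ is invertible in $\mathcal{L}(U)$, is invertible with $S(\lambda)^{-1}=-H(\lambda)^{-1}\in\mathcal{L}(U)$. I then set
$$E_+=-R(\lambda)B\,S(\lambda)^{-1},\qquad E_-=-S(\lambda)^{-1}B^*R(\lambda),\qquad E_{-+}=S(\lambda)^{-1},$$
$$E=R(\lambda)+R(\lambda)B\,S(\lambda)^{-1}B^*R(\lambda).$$
Every block is a composition of bounded operators, so $\mathcal{E}(\lambda)=\left[\begin{array}{cc}E&E_+\\ E_-&E_{-+}\end{array}\right]\in\mathcal{L}(X\oplus U)$; moreover its first column takes values in $D(A)\oplus U$, the domain of $\mathcal{P}(\lambda)$.

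It remains to verify, by a direct $2\times2$ block multiplication --- using only $(\lambda I-A)R(\lambda)=R(\lambda)(\lambda I-A)=I$ and $B^*R(\lambda)B=H(\lambda)=-S(\lambda)$ --- that $\mathcal{P}(\lambda)\mathcal{E}(\lambda)=I_{X\oplus U}$ and $\mathcal{E}(\lambda)\mathcal{P}(\lambda)=I_{D(A)\oplus U}$. This shows that (\ref{gp}) is a well-posed Grushin problem, with solution $(u,u_-)=\mathcal{E}(\lambda)(v,v_+)$, and reading off the bottom-right corner gives the effective Hamiltonian $E_{-+}(\lambda)=S(\lambda)^{-1}=-H(\lambda)^{-1}$, that is $E_{-+}(\lambda)^{-1}=-H(\lambda)=-B^*(\lambda I-A)^{-1}B$, which is (\ref{he}).

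The algebra is entirely routine because $B$ is bounded: there is no need to pass to the extrapolation space $X_{-1}$ or to invoke the Lebesgue or $\Lambda$-extensions of $B^*$, and no analytic obstruction arises. The one point to keep track of is the normalization of $H$: Definition \ref{fonction} determines $H$ only up to an additive feedthrough constant $D\in\mathcal{L}(U)$, and the Schur complement is always $-B^*R(\lambda)B=D-H(\lambda)$, so identity (\ref{he}) holds precisely for the convention $D=0$ encoded in $\hat y(\lambda)=H(\lambda)\hat u(\lambda)$ with $z_0=0$, as recorded in the last line of the statement.
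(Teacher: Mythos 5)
Your proof is correct and follows essentially the same route as the paper: both pivot on the invertible block $\lambda I-A$, substitute $u=(\lambda I-A)^{-1}(v-Bu_-)$ into $B^*u=v_+$, and reduce well-posedness of (\ref{gp}) to the invertibility of $H(\lambda)=B^*(\lambda I-A)^{-1}B$. You merely carry the computation further by writing out all four blocks of $\mathcal{E}(\lambda)$ and checking the two-sided inverse, which the paper leaves implicit.
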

\begin{proof}
 Suppose that (\ref{cp}) is well-posed and the associated transfer
function $H(\lambda)\in\mathcal{L}(U)$ is invertible. Since
$(\lambda-A)^{-1}B$ takes its values in $X=D(B^*)$ for all
$\lambda\in\rho(A)$, then $H(\lambda)$ is given explicitly by the
desired formula
\begin{equation}\label{dtf}
H(\lambda)=B^*(\lambda-A)^{-1}B,\quad \forall\lambda\in\rho(A).
\end{equation}
 For given $(v,v_+)\in X\times U ;\text{and}\;\lambda\in\rho(A)$, is
there $(u,u_-)\in X\times U$ such that
\begin{equation*}
\left\{
\begin{array}{lll}
(\lambda I-A)u+Bu_-&=&v\\
\qquad B^*u&=&v_+\\
\end{array}\right.
\end{equation*}
from the first equation, we can write
$$u=(\lambda-A)^{-1}v-(\lambda-A)^{-1}Bu_-,$$
and the second equation becomes
$$H(\lambda)u_-=B^*(\lambda-A)^{-1}v-v_+,$$
and therefore, since $H(\lambda)$ is invertible  we get that
(\ref{gp}) is well-posed.
\end{proof}\fd
Now, we consider abstract control problems with feedthrought
operators $D\neq 0$, in the form
\begin{equation}\label{ab23}
\left\{
\begin{array}{lll}
\dot{z}(t)=Az(t)+Bu(t),\\
y(t)=B^*z(t)+ Du(t),\\
z(0)=z_0\\
\end{array}\right.
\end{equation}
where $A:D(A)\subset X\longrightarrow X$, skew-adjoint and
$B\in\mathcal{L}(U, X)\;\text{and} \;D\in\mathcal{L}(U)$. It was
showed in Weiss \cite{G1}, that with $z_0=0$ and
$\mathfrak{Re}(\lambda)$ sufficiently large, the transfer function
of (\ref{ab23}) is given by
\begin{equation}\label{tf1}
H(\lambda)=D+B^*(\lambda I-A)^{-1}B.
\end{equation}
and it satisfies the equation
\begin{equation}\label{pft}
\frac{H(s)-H(\beta)}{s-\beta}=-C(sI-A)^{-1}(\beta I-A)^{-1}B,
\end{equation} for any $s,\beta\in\rho(A)$ with $s\neq \beta$. Thus, the
connection between the transfer function and the effectif
Hamiltonien is given by the following lemma.
\begin{lemma}
for $\lambda\in\rho(A)$, suppose that the following Grushin problem
is well-posed:\\
for given $(v,v_+)\in X_{-1}\times U$, there exists unique
$(u,u_-)\in D(A)\times U$ such that
\begin{equation}\label{grp1}
\left\{
\begin{array}{ll}
(\lambda I-A)u +&Bu_-=v,\\
B^*u \,+&Du_-=v_+\\
\end{array}\right.
\end{equation}
therefore, the effectif Hamiltonien of $(\lambda I-A)$ is given by
\begin{equation}
E_{-+}(\lambda)^{-1}=D-B^*(\lambda I-A)^{-1}B.
\end{equation}
in the sense that its invertibility controls the existence of the
resolvent.
\end{lemma}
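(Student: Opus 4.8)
The proof is a direct elimination, parallel to that of the preceding Proposition, followed by the Schur complement formula recalled in the Introduction; the only new feature is that the lower-right block of the Grushin operator $\mathcal{P}$ is now $D$ rather than $0$.

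First I would fix $\lambda\in\rho(A)$ and solve the first line of (\ref{grp1}) for $u$. Since $(\lambda I-A)^{-1}$ exists and maps $X$ into $D(A)$, and since $B$ is an admissible (here even bounded) control operator, so that $(\lambda I-A)^{-1}B\in\mathcal{L}(U,X)$, we may write
\begin{equation*}
u=(\lambda I-A)^{-1}v-(\lambda I-A)^{-1}Bu_-.
\end{equation*}
For the right-hand side to lie in $D(A)$ one needs $v\in X$; this regularity is implicit in the requirement $(u,u_-)\in D(A)\times U$, and when $v$ is merely in $X_{-1}$ one should read $u\in X$ and interpret $(\lambda I-A)$ as its extension $X\to X_{-1}$.

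Next I would substitute this expression into the second line $B^*u+Du_-=v_+$. Because $C=B^*$ is an admissible observation operator for $\mathbb{T}$, the composition $B^*(\lambda I-A)^{-1}$ is bounded on $X$ — immediately so in the present skew-adjoint, bounded-$B$ situation, and in general via the identity (\ref{pft}), analytic continuation to $\rho(A)$, and the $\Lambda$-extension $C_\Lambda$ of (\ref{gamma}) applied to $u\in X$. The substitution yields
\begin{equation*}
\bigl(D-B^*(\lambda I-A)^{-1}B\bigr)u_-=v_+-B^*(\lambda I-A)^{-1}v,
\end{equation*}
in which, by the transfer-function formula (\ref{tf1}), the operator $M(\lambda):=D-B^*(\lambda I-A)^{-1}B$ is exactly the expression appearing on the right of the claimed identity.

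Finally I would read off the Schur complement. The assignment $(v,v_+)\mapsto(u,u_-)$ is well defined and bounded — that is, (\ref{grp1}) is a well-posed Grushin problem — precisely when $M(\lambda)$ is boundedly invertible: if $M(\lambda)^{-1}\in\mathcal{L}(U)$ then $u_-=M(\lambda)^{-1}(v_+-B^*(\lambda I-A)^{-1}v)$ and $u$ is recovered from the displayed formula, which gives existence and uniqueness; conversely, taking $v=0$ forces $M(\lambda)u_-=v_+$ to be uniquely solvable for every $v_+\in U$, so $M(\lambda)$ is a bounded bijection and hence has bounded inverse. Writing the inverse $\mathcal{E}$ in $2\times2$ block form, its $(2,2)$-entry is $E_{-+}(\lambda)=M(\lambda)^{-1}$, whence
\begin{equation*}
E_{-+}(\lambda)^{-1}=D-B^*(\lambda I-A)^{-1}B,
\end{equation*}
and the displayed equivalence is exactly the assertion that invertibility of the effective Hamiltonian controls the solvability (the ``existence of the resolvent'') of the Grushin reformulation. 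All the other manipulations are routine; the one step demanding genuine care is the functional-analytic bookkeeping of the first two paragraphs, where admissibility of $B$ and of $C=B^*$ — and, in the general non-bounded case, the $\Lambda$-extension — must be invoked.
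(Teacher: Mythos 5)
Your argument is correct and is exactly the elimination-plus-Schur-complement computation the paper uses for the preceding Proposition; the paper in fact states this Lemma without any proof at all, so your write-up simply supplies the obvious adaptation (replace the lower-right block $0$ by $D$, giving $E_{-+}(\lambda)^{-1}=D-B^*(\lambda I-A)^{-1}B$). No gaps; the remarks on interpreting $(\lambda I-A)$ on $X_{-1}$ and on the bounded invertibility of $M(\lambda)$ via the open mapping theorem are careful additions the paper does not bother to make.
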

In fact, feedthrought operator play an important  role in the study
of regularity of such abstract control problems with observation in
the Weiss sense as we will see in  section 3, thus we have the
characterization of regularity obtained in Weiss \cite{G1}:
\begin{thm}[ Weiss \cite{G1}]
 An abstract linear system is regular if and only if its
transfer function has a strong limit at $+\infty$ (along the real
axis), and we have
$$\displaystyle{\lim _{\lambda\in\mathbb{R},\lambda\rightarrow
+\infty}}\, H(\lambda)v=Dv,\quad \forall\,v\in U.$$
\end{thm}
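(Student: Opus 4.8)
The plan is to reduce the statement to an Abelian--Tauberian equivalence for the step response, in the spirit of Weiss's original argument. Fix $v\in U$, take $z_0=0$, and apply the constant (step) input $u(t)\equiv v$; let $y_v$ be the corresponding output produced by the input--output map of the well-posed system. Since $\widehat u(\lambda)=v/\lambda$, the transfer-function relation $\widehat{y_v}(\lambda)=H(\lambda)\widehat u(\lambda)$ yields the key identity $H(\lambda)v=\lambda\,\widehat{y_v}(\lambda)$. Recall that, in the sense of Weiss, the system is regular precisely when for every $v\in U$ the step response has a Ces\`aro limit at the origin, i.e. $m_v(\tau):=\frac1\tau\int_0^\tau y_v(\sigma)\,d\sigma$ converges as $\tau\to0^+$, the limit being by definition $Dv$. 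Thus the theorem is equivalent to the assertion that $\lim_{\tau\to0^+}m_v(\tau)$ exists if and only if $\lim_{\lambda\to+\infty}\lambda\,\widehat{y_v}(\lambda)$ exists, with the two limits equal.

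For the forward (Abelian) direction I would integrate by parts and rescale. Writing $F(\sigma)=\int_0^\sigma y_v=\sigma\,m_v(\sigma)$ and using $\widehat{y_v}(\lambda)=\lambda\int_0^\infty e^{-\lambda\sigma}F(\sigma)\,d\sigma$, the substitution $\sigma=\rho/\lambda$ gives
$$\lambda\,\widehat{y_v}(\lambda)=\int_0^\infty \rho\,e^{-\rho}\,m_v(\rho/\lambda)\,d\rho .$$
The boundary terms vanish because well-posedness bounds $\|y_v\|_{L^2(0,\tau)}$, hence $F$, at the rate needed for large $\lambda$. Since $\int_0^\infty \rho e^{-\rho}\,d\rho=1$ and $m_v(\rho/\lambda)\to Dv$ pointwise as $\lambda\to+\infty$, dominated convergence (the $L^2$-bound furnishes $|m_v(\tau)|\le c\,|v|$ near $0$, an integrable majorant against $\rho e^{-\rho}$) yields $\lambda\,\widehat{y_v}(\lambda)\to Dv$, that is $H(\lambda)v\to Dv$.

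For the converse (Tauberian) direction, assume $\lim_{\lambda\to+\infty}H(\lambda)v=:Dv$ exists. Subtracting the step response of the constant $Dv$ (whose Laplace transform is $Dv/\lambda$) reduces matters to the case $Dv=0$: one must show that $\lambda\,\widehat{y_v}(\lambda)\to0$ forces $m_v(\tau)\to0$. This is the heart of the proof and the step I expect to be the main obstacle, since the Abelian implication is \emph{not} reversible for arbitrary $L^2_{loc}$ functions. The missing Tauberian hypothesis is supplied by the structure of the system: well-posedness makes the input--output map bounded on $L^2$, and the admissibility estimate $\|(\lambda I-A)^{-1}B\|_{\mathcal{L}(U,X)}\le \delta/\sqrt{\mathrm{Re}\,\lambda}$ controls the behaviour of $y_v$ near $t=0$. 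With these bounds one invokes a Hardy--Littlewood-type Tauberian theorem to upgrade the convergence of the averaged transform $\int_0^\infty \rho e^{-\rho}m_v(\rho/\lambda)\,d\rho\to0$ into genuine convergence $m_v(\tau)\to0$.

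Finally I would record that in both directions the common value of the two limits is the single operator $D$, which is therefore at once the feedthrough of the regular system and the strong limit $\lim_{\lambda\in\mathbb{R},\lambda\to+\infty}H(\lambda)$; its boundedness $D\in\mathcal{L}(U,Y)$ follows from the uniform estimate $|m_v(\tau)|\le c\,|v|$. I would also note that when $B$ and $C=B^*$ are bounded the argument trivializes: then $H(\lambda)=D+B^*(\lambda I-A)^{-1}B$ with $\|(\lambda I-A)^{-1}\|\to0$, so $H(\lambda)\to D$ directly and every such system is automatically regular.
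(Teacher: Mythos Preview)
The paper does not prove this theorem at all: it is quoted verbatim as a result of Weiss \cite{G1} and given without argument, so there is no ``paper's own proof'' to compare against. Your proposal is therefore not redundant with anything in the text; it is an attempt to reconstruct Weiss's original proof, and it does follow his strategy correctly in outline: reduce to the step response, rewrite $H(\lambda)v=\lambda\,\widehat{y_v}(\lambda)$, and then show the equivalence between the Ces\`aro limit of $y_v$ at $0^+$ and the strong limit of $H(\lambda)v$ as $\lambda\to+\infty$.

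Two points deserve tightening. First, the constant input $u\equiv v$ is not in $L^2(\mathbb{R}_+,U)$, so the Laplace identity and the vanishing of the boundary term in your integration by parts need the growth bound of the semigroup (or the boundedness of $H$ on some $\mathbb{C}_\alpha$) rather than a bare $L^2$ estimate; this is routine but should be said. Second, and more substantively, your Tauberian step is only a placeholder: ``one invokes a Hardy--Littlewood-type Tauberian theorem'' is exactly where the real work lies, and you have not identified which Tauberian condition the well-posedness of the system actually supplies. In Weiss's argument the relevant bound is a uniform-in-$\tau$ estimate on $\|y_v\|_{L^2(0,\tau)}/\sqrt{\tau}$ coming from the input--output map, which gives the slow-variation hypothesis needed for the Karamata/Hardy--Littlewood machinery. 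Without pinning this down, the converse implication is not yet a proof. The Abelian direction and the final remarks on boundedness of $D$ and the trivial bounded-$B$ case are fine.
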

Let us now consider  abstract  problems (\ref{cp})  with unbounded
control and observation operators, that's with the same assumption
on $A$ and $\tilde{A}$  is an extension of $A\;\text{with
domain}\;D(A)$ on $X_{-1}$ denoted also by $A$ exception that
$B\in\mathcal{L}(U, X_{-1})$ assumed to be admissible and $U$ is an
Hilbert space identified with its dual. We assume that its transfer
function  is invertible as an element of $\mathcal{L}(U)$(here we
have not explicitly its desired expression (\ref{dtf}) but the only
thing we know that it checks  relation (\ref{pft})).

Under a suitable construction of a well-posed Grushin problem, we
prove some properties of transfer function of (\ref{cp}).
\begin{prp}
Let $\mathcal{O}_c$ be a connected open of $\rho(A)$ and
$\lambda\in\mathcal{O}_c$.\\
Then, $(H(\lambda))_{\lambda\in\mathcal{O}_c}$ is a family of
Fredholm operators depends holomorphically on $\lambda$. Moreover,
if $H(\lambda_0)^{-1}$ exists at some points
$\lambda_0\in\mathcal{O}_c$, then
$\mathcal{O}_c\ni\lambda\longrightarrow H(\lambda)^{-1}$ is
meromorphic.
\end{prp}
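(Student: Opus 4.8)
The plan is to run the argument through the Grushin operator attached to $(\ref{cp})$ and to close it with the analytic Fredholm theorem; the single genuine difficulty is the Fredholm property. To begin with holomorphy, I would specialize $(\ref{pft})$ to the triple $(A,B,B^{*})$ and fix $\beta\in\mathcal{O}_{c}$, which gives
\begin{equation*}
H(\lambda)=H(\beta)-(\lambda-\beta)\,B^{*}(\lambda I-A)^{-1}(\beta I-A)^{-1}B ,\qquad \lambda\in\rho(A).
\end{equation*}
Here $(\beta I-A)^{-1}B\in\mathcal{L}(U,X)$ by admissibility of $B$; the map $B^{*}(\lambda I-A)^{-1}\in\mathcal{L}(X,U)$ is analytic (the analyticity of $C(\lambda I-A)^{-1}$ recalled in Section~2 with $C=B^{*}$, continued from a half-plane to all of $\rho(A)$ by the resolvent identity); and $\lambda\mapsto(\lambda I-A)^{-1}$ is holomorphic on $\rho(A)$. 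Hence the right-hand side, and with it $H$, is a holomorphic $\mathcal{L}(U)$-valued function on $\mathcal{O}_{c}$.

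Next I would establish the Fredholm property through the Grushin reduction. For $\lambda\in\rho(A)$ the realization of $\lambda I-A$ occurring in $\mathcal{P}(\lambda)$ is an isomorphism, so
\begin{equation*}
\mathcal{P}(\lambda)=\left[\begin{array}{cc}\lambda I-A & B\\ B^{*} & 0\end{array}\right]
=\left[\begin{array}{cc}I & 0\\ B^{*}(\lambda I-A)^{-1} & I\end{array}\right]
\left[\begin{array}{cc}\lambda I-A & B\\ 0 & -H(\lambda)\end{array}\right],
\end{equation*}
a product of a boundedly invertible operator with an upper triangular operator whose $(1,1)$ entry is an isomorphism. Consequently $\mathcal{P}(\lambda)$ is Fredholm if and only if $H(\lambda)$ is, with the same index, and invertible precisely when $H(\lambda)$ is --- that is, when $(\ref{gp})$ is well-posed --- in which case $E_{-+}(\lambda)=-H(\lambda)^{-1}$ in agreement with $(\ref{he})$. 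To see that $H(\lambda)$ is actually Fredholm for every $\lambda\in\mathcal{O}_{c}$, take $\beta=\lambda_{0}$ in the identity above, $\lambda_{0}$ being the point at which $H(\lambda_{0})^{-1}$ exists, and write
\begin{equation*}
H(\lambda)=H(\lambda_{0})\bigl(I-K(\lambda)\bigr),\qquad K(\lambda)=(\lambda-\lambda_{0})\,H(\lambda_{0})^{-1}B^{*}(\lambda I-A)^{-1}(\lambda_{0} I-A)^{-1}B .
\end{equation*}
In the settings of interest $B^{*}(\lambda I-A)^{-1}(\lambda_{0} I-A)^{-1}B$ is compact on $U$ (for instance because $(\mu I-A)^{-1}B$ is compact as a map $U\to X$, or because $A$ has compact resolvent), so $K$ is a compact-operator-valued holomorphic family and $H(\lambda)=H(\lambda_{0})+(\text{compact})$ is a compact perturbation of an invertible operator; hence $H(\lambda)$ is Fredholm of index $0$ on all of $\mathcal{O}_{c}$.

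For the meromorphy, note that $K$ is holomorphic and compact-valued on the connected open set $\mathcal{O}_{c}$ and $K(\lambda_{0})=0$, so $I-K(\lambda_{0})=I$ is invertible; the analytic Fredholm theorem then produces a discrete set $Z\subset\mathcal{O}_{c}$ off which $I-K(\lambda)$ is invertible, with $\lambda\mapsto(I-K(\lambda))^{-1}$ meromorphic on $\mathcal{O}_{c}$ and poles contained in $Z$. Therefore $H(\lambda)^{-1}=(I-K(\lambda))^{-1}H(\lambda_{0})^{-1}$ is meromorphic on $\mathcal{O}_{c}$; equivalently, via $E_{-+}(\lambda)=-H(\lambda)^{-1}$, the effective Hamiltonian of $\lambda I-A$ --- and with it the solvability of the Grushin problem $(\ref{gp})$ and the existence of the resolvent --- is governed by a meromorphic family. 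The main obstacle is the Fredholm step: the factorization only forces $\mathcal{P}(\lambda)$ and $H(\lambda)$ to be Fredholm simultaneously, so some compactness input is unavoidable, since without it $H(\lambda)=B^{*}(\lambda I-A)^{-1}B$ need not be Fredholm when $\dim U=\infty$. This is precisely where the standing hypotheses on $B$ and $A$ must be used, together with continuity of the Fredholm index along $\mathcal{O}_{c}$ to propagate the value $0$ from $\lambda_{0}$; once that is granted, the holomorphy and meromorphy follow routinely from analyticity of the resolvent and the analytic Fredholm theorem.
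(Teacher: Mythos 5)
Your argument reaches the conclusions of the proposition by a genuinely different route from the paper's. You factor $\mathcal{P}(\lambda)$ through the Schur complement to identify the Fredholm properties of $\mathcal{P}(\lambda)$ and $H(\lambda)$, write $H(\lambda)=H(\lambda_0)\bigl(I-K(\lambda)\bigr)$ as a compact holomorphic perturbation of an invertible operator, and close with the analytic Fredholm theorem. The paper instead reduces at the outset to a finite-dimensional control space (``we can always take $U$ with finite dimensional'', $B:\mathbb{C}^n\to X$), so that $H(\lambda)^{-1}=E_{-+}(\lambda)$ is an $n\times n$ matrix, Fredholmness of index $0$ is automatic, and meromorphy of $H(\lambda)^{-1}$ is obtained by covering $\mathcal{O}_c$ with patches on which $f_j(\lambda)=\det H(\lambda)^{-1}$ is holomorphic and using connectedness of $\mathcal{O}_c$ together with invertibility at one point to conclude that no $f_j$ vanishes identically --- the classical determinant/identity-theorem argument in the style of Sj\"ostrand--Zworski rather than analytic Fredholm theory. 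The two proofs pay for the same difficulty in different currencies: you need compactness of $B^*(\lambda I-A)^{-1}(\lambda_0 I-A)^{-1}B$ (an extra hypothesis you honestly flag, and one that is genuinely indispensable, since for infinite-dimensional $U$ without compactness $H(\lambda)$ need not be Fredholm at all), while the paper needs $\dim U<\infty$, which is likewise nowhere in the statement of the proposition. Your version is the more general and the more carefully argued of the two --- the triangular factorization showing that $\mathcal{P}(\lambda)$ and $H(\lambda)$ are simultaneously Fredholm with equal index, and the explicit appeal to the analytic Fredholm theorem, make precise steps the paper only gestures at --- whereas the paper's determinant argument is more elementary when it applies, dispensing with analytic Fredholm theory entirely.
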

\begin{proof}
From the reformulation of abstract control problem with observation
on a well-posed Grushin problem, with $B^*_L$ the Lebesgue extension
of $B$ in the place of this later,  we proved that the link is the
invertibility of the family of transfer functions
$(H(\lambda)_{\lambda\in\mathcal{O}_c}$ where $\mathcal{O}_c$ is a
connected open of $\rho(A)$, and consequently is Fredholm of index
$0$.\\
For $\lambda_0\in\mathcal{O}_c\subset \rho(A)$ (and therefore
$\lambda_0-A$ is Fredholm), we can always take $U$ with finite
dimensional.\\
 let $n_+=\dim\mathrm{ker}(\lambda_0-A)=\dim\mathrm{coker}(\lambda_0-A),\quad
n_+=n_-=n$ and choose  $B:\mathbb{C}^{n}\longrightarrow X$.  In this
case
$$E^{\lambda_0}_{-+}=H(\lambda_0)^{-1}:\mathbb{C}^n\longrightarrow
\mathbb{C}^n$$ is finite matrix with index $n_+-n_-=0$.\\
The invertibility of $H(\lambda_0),\;\lambda_0\in\mathcal{O}_C$ is
equivalent to the well-posedness of $\mathcal{A}(\lambda)$ where
$$\mathcal{A}(\lambda)=\left[
\begin{array}{cc}
  \lambda-A & B \\
  B^* & 0 \\
\end{array}
\right].$$ This shows that there exists a locally finite covering of
$\mathcal{O}_c$, $\{O_j\}$, such that for $\lambda\in
O_j,\;H(\lambda)$ is invertible, more precisely when
$f_j(\lambda)\neq 0$, where $f_j$ is holomorphic in $O_j$(indeed, we
can define $f_j(\lambda_0):=\det\,H(\lambda_0)^{-1}$ where
$H(\lambda_0)^{-1}$ exists for $\lambda_0\in O_j$). Since $O_c$ is
connected and since $H(\lambda_1)$ is invertible for at least one
$\lambda_1\in\mathcal{O}_c$ shows that all $f_j$'s are not
identically zero. That means that $\det\,H(\lambda)^{-1}$ is
non-vanishing holomorphic in some neighbourhood of $\lambda_0$,
$V(\lambda_0)$, and consequently $H(\lambda)$ is a family of
meromorphic operators in $V(\lambda_0)$, where $\lambda_0$ was
arbitrary in $\mathcal{O}_c$.

\end{proof}

\begin{prp}
Let $g$ be holomorphic function on $\mathcal{O}_c$ connected open of
$\rho(A)$. Then for any curve $ \gamma$ homologous to $0$ in
$\mathcal{O}_c$, and on which $(\lambda-A)^{-1}$ exists,  the
operator $\frac{1}{2\pi
i}\int_{\gamma}(\lambda-A)^{-1}g(\lambda)d\lambda$ (that's the
spectral projection of $A$ onto $\mathcal{O}_c$ ) is of trace class
and we have
\begin{equation}\label{tracef}
\mathrm{tr}
\int_{\gamma}(\lambda-A)^{-1}g(\lambda)d\lambda=\mathrm{tr}\int_{\gamma}\partial_{\lambda}H(\lambda)^{-1}H(\lambda
)g(\lambda)d\lambda.
\end{equation}
\end{prp}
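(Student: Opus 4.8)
The plan is to relate the resolvent $(\lambda-A)^{-1}$ to the effective Hamiltonian $E_{-+}(\lambda)$ via the Schur complement formula recalled in the introduction, and then exploit the analytic Fredholm structure established in the previous proposition. Concretely, from the well-posed Grushin problem associated with $(A,B,B^*)$ we have
\begin{equation}\label{schurrel}
(\lambda-A)^{-1}=E(\lambda)-E_+(\lambda)E_{-+}(\lambda)^{-1}E_-(\lambda),\qquad E_{-+}(\lambda)^{-1}=-H(\lambda),
\end{equation}
where $E,E_\pm$ are holomorphic on all of $\mathcal{O}_c$ (they are built from $B$, $B^*$ and the inverse of the full Grushin matrix $\mathcal{A}(\lambda)$, which is invertible wherever $H(\lambda)$ is). The idea is that the singularities of $\lambda\mapsto(\lambda-A)^{-1}$ inside $\mathcal{O}_c$ are exactly the points where $E_{-+}(\lambda)$ fails to be invertible, i.e. the poles of $H(\lambda)^{-1}=-E_{-+}(\lambda)$; near such a point the resolvent is a meromorphic operator-valued function with finite-rank principal part, and in particular $\frac{1}{2\pi i}\int_\gamma(\lambda-A)^{-1}g(\lambda)\,d\lambda$ is a finite-rank, hence trace-class, operator (here one uses that on a curve $\gamma$ homologous to $0$ the spectrum of $A$ enclosed by $\gamma$ lies in $\mathcal{O}_c$ and consists of finitely many eigenvalues of finite multiplicity, which is exactly what the finite-dimensional reduction $U=\mathbb{C}^n$ in the previous proof gives).

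**Next I would** compute the trace on the right-hand side of \eqref{schurrel}. Since $E(\lambda)$ and the products $E_+(\lambda)E_{-+}(\lambda)^{-1}E_-(\lambda)$ differ from $(\lambda-A)^{-1}$ only by the holomorphic term $E(\lambda)$, and $\int_\gamma E(\lambda)g(\lambda)\,d\lambda=0$ by Cauchy's theorem (the integrand is holomorphic inside $\gamma$), we get
\begin{equation}\label{step2}
\mathrm{tr}\int_\gamma(\lambda-A)^{-1}g(\lambda)\,d\lambda=-\mathrm{tr}\int_\gamma E_+(\lambda)E_{-+}(\lambda)^{-1}E_-(\lambda)g(\lambda)\,d\lambda.
\end{equation}
Now use cyclicity of the trace to move $E_-(\lambda)$ to the front, reducing the trace of an operator on $X$ to the trace of the finite matrix $E_-(\lambda)E_+(\lambda)E_{-+}(\lambda)^{-1}$ on $\mathbb{C}^n$. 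The next step is the standard logarithmic-derivative identity: differentiating $E_{-+}(\lambda)E_{-+}(\lambda)^{-1}=I$ and using the Schur relation $\partial_\lambda E_{-+}=E_-E_+$ (the Grushin analogue of $\partial_\lambda(\lambda-A)^{-1}=-(\lambda-A)^{-2}$, obtained by differentiating the inverse of $\mathcal{A}(\lambda)$), one gets $E_-(\lambda)E_+(\lambda)=\partial_\lambda E_{-+}(\lambda)$, and therefore, since $E_{-+}(\lambda)^{-1}=-H(\lambda)$ so that $-\partial_\lambda H(\lambda)^{-1}=\partial_\lambda E_{-+}(\lambda)$ up to the sign bookkeeping,
\begin{equation}\label{step3}
E_-(\lambda)E_+(\lambda)E_{-+}(\lambda)^{-1}=\partial_\lambda E_{-+}(\lambda)\,E_{-+}(\lambda)^{-1},
\end{equation}
and rewriting $E_{-+}=-H^{-1}$ turns the right side of \eqref{step2} into $\mathrm{tr}\int_\gamma \partial_\lambda H(\lambda)^{-1}\,H(\lambda)\,g(\lambda)\,d\lambda$ after matching signs, which is the claimed formula \eqref{tracef}.

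**The main obstacle** I anticipate is establishing the two structural identities $\partial_\lambda E_{-+}(\lambda)=E_-(\lambda)E_+(\lambda)$ and the holomorphy/finite-rank properties of $E,E_\pm,E_{-+}$ cleanly, because the paper's Grushin problem is set in the infinite-dimensional spaces $X\oplus U$ and only becomes a genuine $n\times n$ matrix problem after the local finite-dimensional reduction of the previous proposition. I would handle this by first passing to that reduction on a neighbourhood $V(\lambda_0)$ of an arbitrary enclosed eigenvalue $\lambda_0$, where $\mathcal{A}(\lambda)$ is a bordered matrix whose inverse entries are rational in the matrix entries of $\lambda-A$ and $B$, so differentiation of $\mathcal{A}(\lambda)^{-1}$ is literally the matrix identity $\partial_\lambda \mathcal{A}^{-1}=-\mathcal{A}^{-1}(\partial_\lambda\mathcal{A})\mathcal{A}^{-1}$ and reading off the $(-+)$ block gives $\partial_\lambda E_{-+}=-E_-(\partial_\lambda(\lambda-A))E_+-\dots=E_-E_+$ because $\partial_\lambda(\lambda-A)=I$ and the $B$, $B^*$ blocks are $\lambda$-independent. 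A secondary subtlety is justifying that the trace-class integral is unchanged by the cyclic manipulations and by discarding the holomorphic part $E(\lambda)$; this is routine once one knows the integrand is trace-class, which follows from finite-rankness of the principal parts at the finitely many poles inside $\gamma$. Finally, patching the local computation over the finitely many enclosed eigenvalues via additivity of the contour integral and of the trace completes the argument, $\blacksquare$.
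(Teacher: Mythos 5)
Your argument is essentially the paper's own proof: both rest on the Schur complement formula $(\lambda-A)^{-1}=E(\lambda)-E_+(\lambda)E_{-+}(\lambda)^{-1}E_-(\lambda)$, on discarding the holomorphic block $E(\lambda)$ by Cauchy's theorem, on the finite rank of $E_+E_{-+}^{-1}E_-$ (hence trace class) coming from the finite-dimensional reduction of the previous proposition, and on the identity obtained by differentiating $\mathcal{E}(\lambda)=\mathcal{A}(\lambda)^{-1}$ together with cyclicity of the trace. The only correction needed is the sign of the key identity: since $\dot{\mathcal{A}}(\lambda)$ has $I$ in its upper-left block and zeros elsewhere, reading off the lower-right block of $\dot{\mathcal{E}}=-\mathcal{E}\dot{\mathcal{A}}\mathcal{E}$ gives $\partial_\lambda E_{-+}=-E_-E_+$ (exactly as the paper states), not $+E_-E_+$ as you wrote; with that fixed, $E_{-+}^{-1}\dot{E}_{-+}=H(\lambda)\,\partial_\lambda H(\lambda)^{-1}$ and the signs in \eqref{tracef} close up without any further adjustment.
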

\begin{proof}
Basic idea: writing $\partial_{\lambda}A(\lambda)=\dot{A}(\lambda)$,
we have
$$\dot{\mathcal{E}}(\lambda)=-\mathcal{E}(\lambda)\dot{\mathcal{A}}(\lambda)\mathcal{E}(\lambda)$$
where $\mathcal{E}(\lambda)$ as in the previous proposition and
$\mathcal{E}(\lambda)$ is given by
$$\left[
\begin{array}{ccc}
  E(\lambda) & &E_+(\lambda) \\
  E_-(\lambda)& & E_{-+}(\lambda) \\
\end{array}
\right]$$which gives
$$E_-(\lambda)E_+(\lambda)=-\dot{\mathcal{E}}_{-+}(\lambda)$$
we recall that
$$(\lambda-A)^{-1}=E(\lambda)-E_+(\lambda)\dot{\mathcal{E}}_{-+}(\lambda)E_-(\lambda).$$
Since $E_{-+}(\lambda)^{-1}$ is a finite matrix, then
$$
\int_{\gamma}(\lambda-A)^{-1}g(\lambda)d\lambda=-
\int_{\gamma}E_+(\lambda)E_{-+}(\lambda)^{-1}E_-(\lambda)g(\lambda)d\lambda$$
is an operator of trace class.

\end{proof}


\subsection{Some regularity results}
In this section we show how the property of regularity in the Weiss sense \cite{G1} is conserved along the iterations of Grushin problems.\\
Consider the system of evolution equations
\begin{equation}\label{evo}
\left\{
\begin{array}{ll}
\dot{z}(t)&=Az(t)+Bu(t),\quad z(0)=z_0,\\
y(t)&=B^*z(t)\\
\end{array}\right.
\end{equation}
where
\begin{enumerate}
\item $A:D(A)(\subset X_{-1})\longrightarrow X_{-1}$ is an unbounded positive self-adjoint operator in the
Hilbert space $X$,\\
\item $B\in\mathcal{L}(U,X_{-1})$,\\
\item $B^*\in\mathcal{L}(X_1, U)$ is defined as
$$(B^*x,u)_U=\langle x,Bu\rangle_{X_1\times X_{-1}}\quad \forall x\in X_1.$$
\end{enumerate}
Assume that (\ref{evo}) is well-posed and its transfer function
$H(s)\in\mathcal{L}(U)$  is uniquely determined by the pair $(A,B)$
and assumed to be invertible and checks the following relation
\begin{equation*}
 \frac{H(\lambda)-H(\mu)}{\lambda-\mu}=-B^*(\lambda I-A)^{-1}(\mu
 I-A)^{-1}B,\quad\forall\lambda,\,\mu\in\rho(A).
\end{equation*}
Suppose that system (\ref{evo}) is  regular; that's:
$$\lim _{\lambda\in\mathbb{R},\lambda\rightarrow
+\infty}\,H(\lambda)u=Du\quad \forall\,u\in U,$$ where
$D\in\mathcal{L}(U)$ called feedthrought operator. For more details
we refer to \cite{C1},\cite{G1}....Thus, for $\lambda \in\rho(A)$,
the associated Grushin problem is
\begin{equation}\label{gp1}
\left\{
\begin{array}{ll}
(\lambda I-A)u_1+Bu_2&=v_1\\
B^*u_1&=v_2.\\
\end{array}\right.
\end{equation}
In matrix form, (\ref{gp1}) is written as
$$\mathcal{A}(\lambda)=\left[
\begin{array}{ccc}
  \lambda I-A &  & B \\
  B^* &  & 0 \\
\end{array}
\right]:X_1\oplus U\longrightarrow X_{-1}\oplus U.$$ Hence,
(\ref{gp1}) is well-posed if and only if $\mathcal{A}(\lambda)$ is
invertible with
$$\mathcal{A}(\lambda)^{-1}=\left[
\begin{array}{ccc}
  E(\lambda) &  & E_+(\lambda) \\
  E_-(\lambda) &  & E_{-+}(\lambda) \\
\end{array}
\right].$$ In the Grushin problem context, $E_{-+}(\lambda)$ is
called the effective Hamiltonien of $(\lambda I-A)$, and is also the
Schur complement of $(\lambda I-A)$ and we have
$$E_{-+}(\lambda)^{-1}=-B^*(\lambda I-A)^{-1}B,\quad\forall\lambda\in\rho(A)$$ which is invertible.
System (\ref{gp1}) can be iterated in the following way:\\
Assume that there exists two operators
$$N_-:V_-\longrightarrow U,\quad N_+:U\longrightarrow V_+,$$ with
$V_-,\,V_+$ are two Hilbert space such that the following Grushin
problem is well-posed
\begin{equation}\label{gp2}
\left\{
\begin{array}{ll}
E_{-+}(\lambda)u_3+N_-u_4&=v_3\\
N_+u_3&=v_4\\
\end{array}\right.
\end{equation}
that's
$$\mathcal{E}=\left[
\begin{array}{ccc}
  E_{-+}(\lambda) &  & N_- \\
  N_+ &  & 0 \\
\end{array}
\right]:U\oplus V_-\longrightarrow U\oplus V_+$$ is invertible with
the inverse
$$\mathcal{F}=\left[
\begin{array}{ccc}
  F(\lambda) &  & F_+(\lambda) \\
  F_-(\lambda) &  & F_{-+}(\lambda) \\
\end{array}
\right],$$ then the new Grushin problem
\begin{equation}\label{gp3}
\left\{
\begin{array}{ll}
(\lambda I-A)u+BN_-\tilde{u}&=\tilde{v}\\
N_+B^*u&=\tilde{v}_-\\
\end{array}\right.
\end{equation}
with the inverse given by
$$\mathcal{G}=\left[
\begin{array}{ccc}
  E-E_+FE_- &  &E_+F_+  \\
  F_-E_- &  &-F_{-+}(\lambda)  \\
\end{array}
\right].$$ Thus, the corresponding evolution problem to (\ref{gp3})
is
\begin{equation}\label{e2}
\left\{
\begin{array}{ll}
\dot{z}(t)=(\lambda I-A)z(t)+BN_-v(t),\quad z(0)=z_1\\
y_2(t)=N_+B^*z(t)\\
\end{array}\right.
\end{equation} which still  regular with transfer function given
$$H_1(\lambda)=N_+H(\lambda)N_-.$$

\section{Application of Grushin problem in control theory}
Let us  starting  by recalling some definitions and properties
mentioned in \cite{CWW}.
\begin{defi}
Suppose that $\Sigma=(\mathbb{T},\Phi,\mathbb{L},\mathbb{F})$ is an
abstract linear system. If $A$ is the generator of $\mathbb{T}$, $B$
is the control operator of $\Sigma$ and $C$ is the observation
operator of $\Sigma$, then we say that $(A,B,C)$ is the triple
associated with $\Sigma$. A triple of operators $(A,B,C)$ will be
called well-posed if there is an abstract linear system $\Sigma$
such that $(A,B,C)$ is the triple associated with $\Sigma$.
\end{defi}
In the following two remarks we try to clarify what well-posedness
of triple of operators  means in terms of differential equations.
See \cite{CWW}.
\begin{remark}
Suppose that $U,X$ and $Y$ are Hilbert  spaces, $A$ is the generator
of a semigroup on $X$, $B\in\mathcal{L}(U,X_{-1})$ and
$C\in\mathcal{L}(X_1,Y)$. If $C_L$ is the Lebesgue extension of $C$,
and if the operator $C_L(\beta I-A)^{-1}B$ is well defined for some
(and  hence any) $\beta\in\rho(A)$, then $(A,B,C)$ is well-posed if
and only if the system of equations
\begin{equation}\label{cp11}
\left\{
\begin{array}{ll}
\dot{z}(t)=Az(t)+Bu(t),\quad z(0)=0\\
y(t)=C_Lz(t)\\
\end{array}\right.
\end{equation} is well-posed in a certain  natural sense. If the
triple is well-posed, but $C_L(\beta I-A)^{-1}B$ does not exist,then
(\ref{cp}) is no longer well-posed.
\end{remark}
\begin{remark}
Let $U,X,Y,A,B,C$ and $C_L$ be as in the previous remark, but we do
not assume that $C_L(\beta I-A)^{-1}B$ makes sense. Then $(A,B,C)$
is well-posed if and only if the following (more complicated) system
of equations is well-posed:
\begin{equation*}
\left\{
\begin{array}{cc}
\dot{z}(t)=Az(t)+Bu(t),\quad z(0)=0\\
y(t)=C_L[z(t)-(\beta I-A)^{-1}Bu(t)]\\
\end{array}\right.
\end{equation*}
in the same sense as (\ref{cp}).
\end{remark}
In this section, we show how a well-posed Grushin problem of type
(\ref{gp}) gives a Hautus test Criteria and then exact observability
and exponential stability of system of type (\ref{cp}).\\
Before starting, we recall some properties and definitions, for more
details see Miller \cite{M} and Hautus \cite{H}. The exact
observability property is dual to the exact controllability
property, as it has been shown in Dolecki and
Russell [8].\\
 few papers in the area of controllability and
observability of systems governed by partial differential equations
have considered a frequency domain approach, related to the
classical Hautus test in the theory of finite dimensional systems
(see Hautus \cite{H}). Roughly speaking, a frequency domain test for
the observability of (\ref{cp}) is formulated only in terms of the
operators $A,\, B^*$ and of a parameter (the frequency). This means
that the time $t$ does not appear in such a test and that we do not
have to solve an evolution equation. In the case of a bounded
observation operator $B^*$, such frequency domain methods have been
proposed in Liu \cite{K}. In the case of an unbounded observation
operator $B^*$,  a Hautus type test has been recently obtained in
Miller \cite{M}. Thus we have
\begin{prp}
The system (\ref{evo}) is exactly observable in time $T>0$ if and
only if there exists a constant $\delta>0$ such that
\begin{equation}\label{ht}
\left\|(\lambda I-A)z\right\|^2_X+\left\|B^*z\right\|^2_U\geq \delta
\left\|z\right\|^2_X,\quad\forall\,z\in D(A),\lambda\in\mathbb{R}.
\end{equation}
We shall refer to (\ref{ht}) as the (infinite-dimensional) Hautus
test.
\end{prp}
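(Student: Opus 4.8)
The plan is to prove the two implications of the equivalence separately, keeping in mind throughout that (\ref{ht}) is precisely the assertion that the Grushin operator $\mathcal{A}(\lambda)$ from (\ref{gp1}), restricted to the first summand $X_{1}\oplus\{0\}$, is bounded below by $\sqrt{\delta}$ uniformly in the frequency parameter; thus (\ref{ht}) is the frequency--domain (stationary) incarnation of well-posedness of the family (\ref{gp1}), to be matched against the time--domain (dynamic) definition of exact observability recalled in Remark \ref{rems}. Note also that for $\lambda$ far from $\sigma(A)$ the inequality (\ref{ht}) is automatic, since $\|(\lambda I-A)z\|_{X}\ge\mathrm{dist}(\lambda,\sigma(A))\,\|z\|_{X}$, so only $\lambda$ in a bounded neighbourhood of the spectrum carries information.

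\textbf{Necessity.} Assuming $(A,B^{*})$ exactly observable in time $T$, i.e. $\int_{0}^{T}\|B^{*}\mathbb{T}_{t}z\|_{U}^{2}\,dt\ge\delta_{0}\|z\|_{X}^{2}$ for all $z\in X$, I would fix $z\in D(A)$ and $\lambda$ (in the bounded range that matters) and set $g=(\lambda I-A)z\in X$. Solving $\dot r=Ar-e^{\lambda t}g$, $r(0)=0$, produces the identity
\begin{equation*}
\mathbb{T}_{t}z=e^{\lambda t}z-\int_{0}^{t}e^{\lambda s}\mathbb{T}_{t-s}g\,ds .
\end{equation*}
Applying $B^{*}$, taking $L^{2}(0,T;U)$--norms, and using the triangle inequality together with the admissibility estimate for $B^{*}$ as an observation operator (which bounds the convolution term by $c_{T}\|g\|_{X}$) and the uniform bound on $|e^{\lambda t}|$ over $[0,T]$ for the relevant $\lambda$, one gets $\sqrt{\delta_{0}}\,\|z\|_{X}\le c_{1}\|B^{*}z\|_{U}+c_{2}\|(\lambda I-A)z\|_{X}$, and squaring yields (\ref{ht}). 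This direction is routine.

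\textbf{Sufficiency.} This is the substantive implication; the plan is to follow the resolvent method of Miller \cite{M}. First I would rewrite (\ref{ht}): for $\lambda\in\rho(A)$ and $w\in X$ it becomes the uniform resolvent estimate
\begin{equation*}
\|(\lambda I-A)^{-1}w\|_{X}^{2}\le\tfrac{1}{\delta}\Big(\|w\|_{X}^{2}+\|B^{*}(\lambda I-A)^{-1}w\|_{U}^{2}\Big),
\end{equation*}
which in Grushin language says that $\mathcal{A}(\lambda)$ is boundedly invertible, uniformly on $\mathbb{R}$, with the effective Hamiltonian $E_{-+}(\lambda)^{-1}=-H(\lambda)$ and the companion blocks staying bounded. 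The next step is to transfer this stationary estimate into the dynamic inequality of Remark \ref{rems}: given $z\in D(A)$, cut off $t\mapsto\mathbb{T}_{t}z$ by a smooth function supported in $(0,T)$, apply the Fourier--Plancherel transform in $t$, and use the uniform resolvent bound along the real axis to dominate $\|z\|_{X}$ by $\int_{0}^{T}\|B^{*}\mathbb{T}_{t}z\|_{U}^{2}\,dt$, the remaining terms being absorbed once $T$ is large enough; equivalently, one argues by contradiction, so that a normalised sequence $z_{n}\in D(A)$ with $\|B^{*}\mathbb{T}_{\cdot}z_{n}\|_{L^{2}(0,T;U)}\to0$ would, using the group (or analytic--semigroup) structure and admissibility, produce an approximate null sequence for $\mathcal{A}(\lambda_{n})$ at suitable real $\lambda_{n}$, contradicting (\ref{ht}).

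\textbf{The main obstacle} is precisely this last passage from the frequency side to the time side: (\ref{ht}) controls $\|z\|_{X}$ only through the single scalar $\|(\lambda I-A)z\|_{X}$ at one frequency, whereas observability is an integral over time. Making the passage quantitative requires an admissibility bound for $B^{*}$ with constants uniform in $\lambda$, and, in the contradiction route, a compactness input that is in general available only under extra spectral hypotheses on $A$ (e.g. compact resolvent, as in the skew--adjoint spectral theory); the exact value of $T$ and the precise hypotheses on $A$ therefore have to be tracked with care, which is why for the unbounded--$B^{*}$ statement I would invoke Miller \cite{M} directly, with Hautus \cite{H} as the motivating finite--dimensional prototype.
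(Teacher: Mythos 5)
The first thing to say is that the paper offers no proof of this proposition to compare against: it is stated as a recalled result, with the bounded-observation case attributed to Liu \cite{K} and the unbounded case to Miller \cite{M}, and the text moves straight on to Theorem \ref{princ}. Judged on its own terms, your necessity direction is the standard, essentially complete argument (it is the one used in \cite{M} and in \cite{TW}): the identity $\mathbb{T}_tz=e^{\lambda t}z-\int_0^te^{\lambda s}\mathbb{T}_{t-s}(\lambda I-A)z\,ds$ is correct, and combining it with the observability inequality of Remark \ref{rems} and the admissibility bound for $B^*$ does yield \rfb{ht}. The one genuine weak point is your uniformity claim in $\lambda$: for real $\lambda$ and the generator of \rfb{evo} the factor $\|e^{\lambda\cdot}\|_{L^2(0,T)}$ is not uniformly bounded, and your observation that only $\lambda$ in a bounded neighbourhood of $\sigma(A)$ carries information does not rescue this when $\sigma(A)$ is unbounded along the real axis (as for a positive self-adjoint $A$). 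The argument closes cleanly only in the setting the paper itself uses in Remark \ref{cerp}, namely $A$ skew-adjoint and $\lambda=i\omega$ with $|e^{\lambda t}|=1$; as literally written, with $\lambda$ ranging over all of $\mathbb{R}$ and $A$ as in \rfb{evo}, the statement (and hence any proof of it) is not quite coherent, and this should be flagged rather than worked around.

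The sufficiency direction in your proposal is a plan, not a proof: the passage from the single-frequency estimate to the time-integrated inequality is precisely the content of Miller's theorem, and you end by invoking \cite{M} directly, which is no more than what the paper does. That is acceptable as a citation but it is not a proof, and it leaves a second defect of the statement untouched: the resolvent condition yields exact observability only in \emph{some} sufficiently large time, not in an arbitrarily prescribed $T>0$ (for the wave equation, for instance, \rfb{ht} is time-independent while observability requires $T$ above a geometric threshold), so the ``if and only if \dots in time $T>0$'' must be read as ``in some time $T>0$''. On the positive side, your framing of \rfb{ht} as the assertion that the Grushin operator $\mathcal{A}(\lambda)$ of \rfb{gp1} is bounded below on $X_1\oplus\{0\}$ uniformly in the frequency is exactly the bridge to Theorem \ref{princ}, and making it explicit here is a genuine improvement on the paper's presentation.
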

A new result in this paper reads as follows.

\begin{thm}\label{princ}
Let $A:D(A)\longrightarrow X,\,B\in\mathcal{L}(U,X)$ such that
$X,\,U,\,D(A)\subset X$ be complex Hilbert spaces and assume that
\begin{equation}\label{condb}
\quad \mathrm{Im}\, B\subset D(A),
\end{equation}
$$\mathcal{A}(\lambda)=\left[
\begin{array}{cc}
  \lambda-A & B \\
  B^* & 0 \\
\end{array}
\right]:D(A)\times U\longrightarrow X\times U,$$ and that $B^*$ has
a uniformly bounded right inverse.\\ If for $Q=\lambda-A,\quad
|\left|Q_{\mathrm{Im}\,B}|\right|_{\mathcal{L}(X)}=\mathcal{O}(1)$,
then
\begin{equation*}
\mathcal{A}(\lambda)\left[
\begin{array}{c}
  u \\
  u_- \\
\end{array}
\right]=\left[
\begin{array}{c}
  v \\
   v_+\\
\end{array}
\right]
\end{equation*}gives that
\begin{equation}\label{htest}
\|v\|^2_X+\|v_+\|^2_U\geq C(\|u\|^2_X+\|u_-\|^2_U).
\end{equation}

\end{thm}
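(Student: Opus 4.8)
The plan is to prove that, under the stated hypotheses, the Grushin operator $\mathcal{A}(\lambda)$ is bounded below on $D(A)\times U$ uniformly in $\lambda$ — which is exactly the (infinite dimensional) Hautus test for the full pair. Written out, $\mathcal{A}(\lambda)\left[\begin{array}{c}u\\u_-\end{array}\right]=\left[\begin{array}{c}v\\v_+\end{array}\right]$ reads
\begin{equation*}
(\lambda-A)u+Bu_-=v,\qquad B^*u=v_+,
\end{equation*}
and the goal is a constant $C>0$, independent of $\lambda$, with $\|v\|_X^2+\|v_+\|_U^2\ge C\bigl(\|u\|_X^2+\|u_-\|_U^2\bigr)$.

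First I would unpack the hypothesis that $B^*$ has a (uniformly) bounded right inverse: it is equivalent to $B$ being bounded below, $\|Bw\|_X\ge m\|w\|_U$, so that $B^*B\in\mathcal{L}(U)$ is boundedly invertible and $P:=B(B^*B)^{-1}B^*$ is the orthogonal projection of $X$ onto the closed subspace $\operatorname{Im}B$, with $I-P$ the orthogonal projection onto $\ker B^*$. By \eqref{condb} one has $\operatorname{Im}B\subset D(A)$, hence $Px\in D(A)$ for all $x\in X$, so $P$ and $I-P$ both preserve $D(A)$ and we may split $u=u_1+u_0$ with $u_1:=Pu\in\operatorname{Im}B\subset D(A)$ and $u_0:=(I-P)u\in\ker B^*\cap D(A)$.

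Next I would exploit the two equations. From $B^*u=v_+$ and $u_0\in\ker B^*$ we get $B^*u_1=v_+$; since $B^*$ is boundedly invertible on $\operatorname{Im}B$ this gives $u_1=B(B^*B)^{-1}v_+$ and $\|u_1\|_X\le c_1\|v_+\|_U$ with $c_1$ independent of $\lambda$. As $u_1\in\operatorname{Im}B$, the hypothesis $\|Q_{\operatorname{Im}B}\|_{\mathcal{L}(X)}=\mathcal{O}(1)$ — read as a $\lambda$-independent bound $\|Qw\|_X\le c\,\|w\|_X$ for $w\in\operatorname{Im}B$ — yields $\|Qu_1\|_X\le c_2\|v_+\|_U$ uniformly in $\lambda$. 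Applying $P$ and $I-P$ to the first equation $Qu_1+Qu_0+Bu_-=v$, and using $PBu_-=Bu_-$, $(I-P)Bu_-=0$, gives
\begin{equation*}
Bu_-=Pv-PQu_1-PQu_0,\qquad (I-P)Qu_0=(I-P)v-(I-P)Qu_1,
\end{equation*}
so in particular $\|(I-P)Qu_0\|_X\le\|v\|_X+c_2\|v_+\|_U$.

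\textbf{The main obstacle} is to turn this into a bound on $u_0$ itself, i.e.\ to show that the reduced pencil $x\mapsto(I-P)(\lambda-A)x=\lambda x-(I-P)Ax$ is bounded below on $\ker B^*\cap D(A)$ uniformly in $\lambda$. At this point the ``cheap'' structure of $\mathcal{A}(\lambda)$ — solvability of $B^*u=v_+$ and boundedness of $Q$ on $\operatorname{Im}B$ — has been used up, so all the coercivity of $\mathcal{A}(\lambda)$ must be carried by this compression; the argument there has to show, from \eqref{condb} and $\|Q_{\operatorname{Im}B}\|_{\mathcal{L}(X)}=\mathcal{O}(1)$, that passing to $(I-P)(\lambda-A)|_{\ker B^*}$ does not destroy whatever coercivity $\lambda-A$ itself enjoys (e.g.\ dissipativity/skew-adjointness of $A$, as in the rest of the paper), thereby giving $\|u_0\|_X\le c_3\bigl(\|v\|_X+\|v_+\|_U\bigr)$. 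Granting this, the first displayed relation of the previous step together with $\|u_-\|_U\le m^{-1}\|Bu_-\|_X$ gives $\|u_-\|_U\le c_4\bigl(\|v\|_X+\|v_+\|_U\bigr)$, while $\|u\|_X\le\|u_1\|_X+\|u_0\|_X\le c_5\bigl(\|v\|_X+\|v_+\|_U\bigr)$; squaring, adding and taking $C$ proportional to $(c_4^2+c_5^2)^{-1}$ yields \eqref{htest}.
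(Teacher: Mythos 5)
Your decomposition is the same one the paper uses: you project onto $\mathrm{Im}\,B=(\ker B^*)^{\perp}$, recover the component $u_1=\Pi u$ from the second equation via the right inverse of $B^*$, control $(\lambda-A)u_1$ by the hypothesis $\|(\lambda-A)|_{\mathrm{Im}\,B}\|_{\mathcal{L}(X)}=\mathcal{O}(1)$, and then try to extract $u_0=(I-\Pi)u$ and $u_-$ from the two compressions of the first equation. Up to notation ($P=B(B^*B)^{-1}B^*$ versus the paper's $\Pi$, $P_+$, $P_-=P_+^*$), every step you actually carry out coincides with a step of the paper's proof.

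The difficulty is that you stop exactly where the content is. The ``main obstacle'' you name --- a $\lambda$-uniform lower bound for the compression of $\lambda-A$ to $\ker B^*\cap D(A)$ --- is not a deferrable technicality: without it nothing controls $\|u_0\|_X$, and \eqref{htest} genuinely fails (take $u_-=0$ and $u$ a unit approximate eigenvector of $A$ lying in $\ker B^*$; then $v_+=0$ and $\|v\|_X$ can be made arbitrarily small while $\|u\|_X=1$). Since the stated hypotheses \eqref{condb} and $\|Q|_{\mathrm{Im}\,B}\|_{\mathcal{L}(X)}=\mathcal{O}(1)$ say nothing about $\lambda-A$ on $\ker B^*$, this coercivity cannot be derived from them; it is an additional input of Hautus type. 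For comparison, the paper's own proof supplies this step only as its unproved opening inequality $\|(I-\Pi)u\|^2\le|\langle(\lambda-A)(I-\Pi)u,(I-\Pi)u\rangle|$, i.e.\ it simply asserts form-coercivity of $\lambda-A$ on $\ker B^*$ with constant $1$; so you have reproduced essentially all of the paper's argument except that assertion, and you have correctly located its soft spot. A second, smaller gap is shared with the paper: to bound $\|Bu_-\|_X$ from $Bu_-=\Pi v-\Pi Qu_1-\Pi Qu_0$ you still need $\|\Pi Qu_0\|_X\lesssim\|u_0\|_X$, a uniform bound on the off-diagonal block $\Pi(\lambda-A)(I-\Pi)$; this follows from the stated hypothesis only after taking adjoints, which requires $\lambda-A$ to be skew-adjoint ($A$ skew-adjoint and $\lambda\in i\mathbb{R}$) --- an assumption the theorem does not state but which the paper's estimate for $u_-$ silently uses.
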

\vskip 0.3cm
\begin{remark}\label{cerp} {\rm
\begin{enumerate}\vskip 0.1cm
\item In Theorem \ref{princ}, we remark that we don't need to have a
well-posed Grushin problem in order to get inequality (\ref{htest}).
\item Since $B^*$ has a uniformly bounded right inverse, then $B^*$
is surjective and according to N. K. Nikolski \cite{N}, the  system
$(A,B^*)$ is exactly controllable in any time $\tau>0$ with $A$ is
skew-adjoint operator.
\item From the point of view of applications it is often sufficient to have an explicit description of the space accessible states
$$\mathrm{Im}\,c(\tau)=c(\tau) L^2(0,\tau;U)$$
instead of strong demand restrictive exact controllability
$\mathrm{Im}\,c(\tau)=X$.
\item If $A$ is skew-adjoint on $X$ and $B\in\mathcal{L}(U,X)$ as
in the previous theorem and for
$\lambda=i\omega,\,\omega\in\mathbb{R}$, therefore  we have the
following Hautus type estimation
\begin{equation}\label{ha12}
\|(i\omega-A)u\|^2_X+\|B^*u\|^2_U\geq C\|u\|^2_X,\quad \forall\,u\in
X.
\end{equation}

and if we consider the following abstract control problem with
observation with $A$ and $B$ as above in the previous theorem
\begin{equation}\label{100}
\left\{
\begin{array}{ll}
\dot{z}(t)=Az(t)+Bu(t),\quad z(0)=0\\
y(t)=B^*z(t)\\
\end{array}
\right.
\end{equation}

then, (\ref{100}) is exactly observable and the $C_0$-groups of
isometries $T(t)_{t\in\mathbb{R}}$ generated by $A$ is exponentially
stable via estimation (\ref{ha12}). Hence, the setup of a Grushin
problem (not necessary well-posed) give us exactly observable
system.
\end{enumerate}}
\end{remark}
\begin{proof}of Theorem \ref{princ}.\vskip 0.1cm
Let $\Pi:X\longrightarrow (\mathrm{kernel}\,
B^*)^\perp=\mathrm{Im}\, B$ be the orthogonal projection. Then
\begin{equation*}
\begin{split}
 \|(I-\Pi)u\|^2_X&\leq |\langle P(I-\Pi)u,(I-\Pi)u\rangle|\\
&=|\langle(I-\Pi)v-(I-\Pi)P\Pi u, (I-\Pi)u\rangle|\\
&\leq\|v\|_X\|(I-\Pi)u\|_X+\|P\Pi u\|_X\|(I-\Pi)u\|_X,\\
\end{split}
\end{equation*}
 with $P=(\lambda I-A)$. By assumption, there exists a uniformly bounded operator
$$P_+:U\longrightarrow (\mathrm{ker}\,B^*)^\perp\subset X$$
such that $B^*P_+v_+=v_+$, and consequently $\Pi u=P_+v_+$. Thus
$$ \|(\lambda-A)\Pi
u\|_X=\|(\lambda-A)|_{\mathrm{Im}\,B}P_+v_+\|=\mathcal{O}(1)\|v_+\|_U,$$
and hence
$$\|(I-\Pi)u\|_X\leq \|v\|+\mathcal{O}(1)\|v_+\|_U.$$
With $P_-=P^*_+$, also we have $P_-B^*u_-=u_-$, so that
$$u_-=P_-(v-(\lambda-A)u)=P_-v-P_-\Pi(\lambda-A)(I-\Pi)u-P_-(\lambda-A)\Pi
P_+v_+$$ and
\begin{equation*}
\begin{split}
\|u\|_X&\leq
C(\|v\|+\|(\lambda-A)|_{\mathrm{Im}\,B}\|_{\mathcal{L}(X)}\|(I-\Pi)u\|_X+\|(\lambda-A)|_{\mathrm{Im}\,B}\|_{\mathcal{L}(X)}\|v_+\|_U\\
&\leq C(\|v\|_X+\|(I-\Pi)u\|_X+\|v_+\|_U.\\
\end{split}
\end{equation*}
It 's easy to prove that $\|\Pi u\|_X=\|P_+v_+\|\leq C\|v_+\|$ and
therefore
$$\|v\|_X+\|v_+\|_U\geq C(\|u\|_X+\|u_-\|_U).$$
\end{proof}

\begin{example}{\rm
We consider the following initial and boundary value problem:
\begin{equation}\label{ev1}
\partial^2_tu-\Delta u+G\partial_tu=0,\quad\Omega\times (0,+\infty),
\end{equation}
\begin{equation}\label{ev2}
u=0,\qquad\partial\Omega\times (0,+\infty),
\end{equation}
\begin{equation}\label{ev3}
u(.,0)=u^0\in H^2(\Omega)\cap
H^1_0(\Omega),\;\partial_tu(.,0)=u^1\in H^1_0(\Omega),\qquad\Omega
\end{equation}where $G=(-\Delta)^{-1}$. If we introduce the
following notations:
$$H=L^2(\Omega),\; \mathcal{D}(A_0)=H^2(\Omega)\cap H^1_0(\Omega)$$
$$A_0\varphi=-\Delta\varphi,\quad\forall\varphi\in\mathcal{D}(A_0).$$
The system (\ref{ev1})-(\ref{ev3}) can be written in the following
abstract form:
\begin{equation}\label{abs1}
\left\{
\begin{array}{ll}
\dot{z}(t)=\mathcal{A}_dz(t)\\
z(0)=z^0,\\
\end{array}\right.
\end{equation}where
$$z(t)=\left(
\begin{array}{c}
  u \\
  \partial_tu \\
\end{array}
\right),\quad \mathcal{A}_d=\left(
\begin{array}{cc}
  0 & I \\
   -\Delta& -G \\
\end{array}
\right),\quad z^0=\left(
\begin{array}{c}
   u^0\\
  u^1 \\
\end{array}
\right).$$ $A_d$ can be written in the form
$\mathcal{A}_d=\mathcal{A}_0+BB^*$ with
$$\mathcal{A}_0=\left(
\begin{array}{cc}
  0 & I \\
  -\Delta & 0 \\
\end{array}
\right),\qquad B=\left(
\begin{array}{c}
  0 \\
  G^{\frac{1}{2}} \\
\end{array}
\right),\qquad B^*=\left(
\begin{array}{cc}
  0 & G^{\frac{1}{2}} \\
\end{array}
\right).$$ Then, let
$$\mathcal{B}=BB^*:U=(H^2(\Omega)\cap
H^1_0(\Omega))\times H^1_0(\Omega)\longrightarrow (H^2(\Omega)\cap
H^1_0(\Omega))\times H^1_0(\Omega)$$ thus, it's easy to check that
that $B$ is onto and that the range of $\mathcal{B}$ is contained in
$D(\mathcal{A}_0)=(H^2(\Omega)\cap H^1_0(\Omega))\times
H^1_0(\Omega)$. Then according to the above Theorem, we have
\begin{equation*}
\exists
\delta'>0;\quad\left\|(i\omega-\mathcal{A}_0)\left(\begin{array}{c}z\\y\\\end{array}\right)\right\|^2_{X}+\left\|\mathcal{B}^*\left(\begin{array}{c}z\\y\\\end{array}\right)\right\|^2_U
\geq\delta'\left\|\left(\begin{array}{c}z\\y\\\end{array}\right)\right\|^2_{X},
\end{equation*}
$$\quad\forall\;\omega\in\mathbb{R},\;\left(\begin{array}{c}z\\y\\\end{array}\right)\in
D(\mathcal{A}_0).$$ which is equivalent to (with $y=iwz$)
\begin{equation}
\left\|(\omega^2-A_0)z\right\|^2_H+\left\|\omega
G^{\frac{1}{2}}z\right\|^2_U\geq \delta\|\omega
z\|^2,\quad\forall\omega\in\mathbb{R},\;z\in \mathcal{D}(A_0).
\end{equation}
}
\end{example}
\vskip 0.2cm
 Now, we introduce other types of controllability of
system (\ref{100}). Before that we recall the notion of Riesz basis
and for more details, we refer readers to \cite{TW}.
\begin{defi}
A sequence $(\varphi_n)_{n\geq 1}$ in a Hilbert space $X$ forms a
Riesz basis if
\begin{enumerate}
\item $\overline{\mathrm{span}}\{\varphi_n\}=X$ and
\item There exist positive constants $m$ and $M$ such that for an
arbitrary integer $n$ and scalar $(a_n)_{n\geq 1} $ one has
$$m\sum_{n\geq 1}|a_n|^2\|\varphi_n\|^2\leq \|\sum_{n\geq
1}a_n\varphi_n\|^2\leq M\sum_{n\geq 1}|a_n|^2\|\varphi_n\|^2.$$
\end{enumerate}
\end{defi}
From the definition, one can easily see that an orthonormal complete
sequence in a Hilbert space is a Riesz basis. Hence, Riesz basis is
such a basis that is equivalent to orthonormal basis under bounded
invertible transform, that's, for any given Riesz basis
$(\varphi_n)_{n\geq 1}$ in $X$, there exist a bounded invertible
operator $T$ such that
$$T\varphi_n=e_n,\quad n\geq 1$$
where $(e_n)_{n\geq 1}$ is an orthonormal basis. Also once we have a
Riesz basis $(\varphi_n)_{n\geq 1}$ for $X$, then we can identify
$X$ with $\ell^2$ via
$$x=\sum_{n\geq 1}a_n\varphi_n\in X\longleftrightarrow\,\sum_{n\geq
1}|a_n|^2<\infty.$$ As we said in Remark \ref{cerp} about the
characterization of $\mathrm{Im}\,c(t)$, the following theorem of
Nikolski \cite{N} gives an explicit description of
$\mathrm{Im}\,c(t)$ in the case where the generator $A$ has a Riesz
basis of eigenvectors.
\begin{thm}\label{rbasis}
Let $(\varphi_n)_{n\geq 1}$ be a Riesz basis in $X$ consisting of
eigenvectors of $A$ and $(\psi_n)_{n\geq 1}$ its biorthogonal and
assume that $$A\varphi_n=-\lambda_n\varphi_n,\quad n\geq 1$$ then,
if the family $(\mathcal{E})_{n\geq 1}$ defined by
$$\mathcal{E}_n(t)=e^{-\bar{\lambda_n}t}B^*\psi_n$$
is also a Riesz basis in $L^2(0,t;U)$  then
$$\mathrm{Im}\,c(t)=\{\sum_{n\geq
1}b_n\varphi_n\;,\sum_{n\geq1}|b_n|^2\frac{1}{\|\mathcal{E}_n\|^2_{L^2}}\,<\infty\}.$$
\end{thm}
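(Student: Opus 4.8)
The plan is to compute the operator $c(t)$ explicitly in the eigenbasis and then identify its range directly from the expansion. First I would recall the Duhamel formula $c(t)u=\int_0^t\mathbb{T}_{t-s}Bu(s)\,ds$ and write everything relative to the Riesz basis $(\varphi_n)_{n\geq 1}$ and its biorthogonal sequence $(\psi_n)_{n\geq 1}$, which satisfy $\langle\varphi_m,\psi_n\rangle=\delta_{mn}$. Since $A\varphi_n=-\lambda_n\varphi_n$, the semigroup acts by $\mathbb{T}_t\varphi_n=e^{-\lambda_n t}\varphi_n$, and by duality $\mathbb{T}_t^*\psi_n=e^{-\bar\lambda_n t}\psi_n$. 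For $x=c(t)u$ the coefficient of $\varphi_n$ in the expansion $x=\sum_n b_n\varphi_n$ is obtained by pairing with $\psi_n$:
\begin{equation*}
b_n=\langle c(t)u,\psi_n\rangle=\int_0^t\langle\mathbb{T}_{t-s}Bu(s),\psi_n\rangle\,ds=\int_0^t\langle u(s),B^*\mathbb{T}^*_{t-s}\psi_n\rangle_U\,ds=\langle u,\mathcal{E}_n\rangle_{L^2(0,t;U)},
\end{equation*}
where $\mathcal{E}_n(s)=e^{-\bar\lambda_n(t-s)}B^*\psi_n$ — up to the harmless reparametrisation $s\mapsto t-s$ this is exactly the family in the statement, and the substitution is unitary on $L^2(0,t;U)$ so it does not affect norms or the Riesz-basis property.

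Next I would use the hypothesis that $(\mathcal{E}_n)_{n\geq 1}$ is itself a Riesz basis of $L^2(0,t;U)$. Let $(\mathcal{E}_n^\#)_{n\geq 1}$ denote its biorthogonal system (also a Riesz basis). Then for any $u\in L^2(0,t;U)$ the coefficients $b_n=\langle u,\mathcal{E}_n\rangle$ run over exactly the weighted $\ell^2$ space determined by the Riesz-basis constants of $(\mathcal{E}_n)$: by the lower and upper frame-type inequalities in the definition of a Riesz basis, $\sum_n|b_n|^2/\|\mathcal{E}_n\|^2_{L^2}<\infty$, and conversely every such sequence $(b_n)$ is attained (take $u=\sum_n b_n\|\mathcal{E}_n\|^{-2}_{L^2}\mathcal{E}_n^\#$, or argue via the bounded invertible transform carrying $(\mathcal{E}_n)$ to an orthonormal basis). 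Pushing this through the identification $x=\sum_n b_n\varphi_n$ gives precisely
\begin{equation*}
\mathrm{Im}\,c(t)=\Big\{\sum_{n\geq 1}b_n\varphi_n\ :\ \sum_{n\geq 1}|b_n|^2\frac{1}{\|\mathcal{E}_n\|^2_{L^2}}<\infty\Big\}.
\end{equation*}

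The main obstacle is bookkeeping rather than depth: one must be careful that the two Riesz-basis structures — $(\varphi_n)$ in $X$ and $(\mathcal{E}_n)$ in $L^2(0,t;U)$ — are genuinely independent, so the weights that appear in the characterisation come only from $\|\mathcal{E}_n\|_{L^2}$ and not from $\|\varphi_n\|_X$ (the latter are pinched between two constants and disappear into the implied equivalence of norms). One also has to check that $B^*\psi_n$ makes sense, i.e.\ that $\psi_n\in D(B^*)$ (equivalently $\psi_n\in X_1$, or at least in $D(C_L)$ in the unbounded setting), so that $\mathcal{E}_n$ is well defined as an $L^2(0,t;U)$ function; this is where the admissibility of $B$ as a control operator (equivalently of $B^*$ as an observation operator) enters, guaranteeing that $s\mapsto B^*\mathbb{T}^*_s\psi_n$ is square-integrable on $[0,t]$. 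Once these points are settled, the identity $b_n=\langle u,\mathcal{E}_n\rangle$ reduces the whole statement to the elementary fact that the range of the analysis operator of a Riesz basis is the associated weighted $\ell^2$ sequence space.
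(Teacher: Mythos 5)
The paper offers no proof of Theorem \ref{rbasis}: it is quoted as a result of Nikolski \cite{N} and the text moves on immediately, so there is no in-paper argument to compare yours against. Judged on its own, your argument is the standard one and is essentially correct: the identity $\langle c(t)u,\psi_n\rangle=\langle u,\mathcal{E}_n(t-\cdot)\rangle_{L^2(0,t;U)}$ exhibits $c(t)$ as the synthesis operator of $(\varphi_n)_{n\ge1}$ composed with the analysis operator of the time-reversed (hence unitarily equivalent) family $(\mathcal{E}_n)_{n\ge1}$, and the range of the analysis operator of a Riesz basis is exactly the weighted $\ell^2$ space with weights $\|\mathcal{E}_n\|_{L^2}^{-2}$. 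Two small points deserve attention. First, your explicit witness for surjectivity is off by a factor: with the usual biorthogonal system, $\langle\mathcal{E}_m^{\#},\mathcal{E}_n\rangle=\delta_{mn}$, one should take $u=\sum_n b_n\mathcal{E}_n^{\#}$ (which converges precisely because $\|\mathcal{E}_n^{\#}\|\asymp\|\mathcal{E}_n\|^{-1}$ for a Riesz basis and $\sum_n|b_n|^2\|\mathcal{E}_n\|^{-2}<\infty$), not $u=\sum_n b_n\|\mathcal{E}_n\|^{-2}\mathcal{E}_n^{\#}$, which would return $\langle u,\mathcal{E}_n\rangle=b_n\|\mathcal{E}_n\|^{-2}$; your alternative route via the bounded invertible map carrying $(\mathcal{E}_n)$ to an orthonormal basis is the clean way to state it. Second, it is worth recording explicitly that the right-hand set is automatically a subset of $X$: given $(b_n)$ with $\sum_n|b_n|^2\|\mathcal{E}_n\|^{-2}<\infty$, one first produces $u$ and then observes that $\sum_n b_n\varphi_n$ converges because it is the $(\varphi_n,\psi_n)$-expansion of the element $c(t)u\in X$; this settles the convergence bookkeeping you flag at the end.
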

\vskip 0.15cm In the case where $A$ is as in the previous theorem
then, each state $x\in X$ is defined formally by its Fourier series
$$x\sim \sum_{n\geq 1}<x,\psi_n>\varphi_n$$
where $(\psi_n)_{n\geq 1}$ is the biorthogonal sequence. It's
natural to search an explicit description of the control space
$\mathrm{Im}\,c(t)$ in the form of "Fourier multipliers".
\begin{defi}
Let $(\omega_n)_{n\geq 1}$ be a positive sequence of reel number. We
put
$$X(\omega_n)=\{x\in X/\,\exists\,y\in
X\,\text{s.t}\,<x,\psi_n>=\frac{1}{\omega_n}<y,\psi_n>,\,n\geq
1\}.$$The system $(A,B)$ is said to be exactly controllable in time
$T>0$ up to a renomalization if there exist $\omega_n>0,\,n\geq 1$
such that
$$S(t)X(\omega_n)\subset X(\omega_n);\quad t\geq 0$$
\begin{equation}\label{imb}
BU=\mathrm{Im}\,B\subset X(\omega_n)
\end{equation}
and $(A|_{X(\omega_n)},B)$ is exactly controllable.
\end{defi}
\vskip 0.2cm The following proposition link the Hautus test criteria
obtained in Theorem \ref{princ} with condition (\ref{imb})
introduced in te obove definition of controllability up to a
renormalization.
\begin{prp}
Let $A:D(A)\longrightarrow X$ as in Theorem \ref{rbasis} and
$B\in\mathcal{L}(U,X)$.\\
 With the renormalization
\begin{equation}\label{renrm}
X(\omega_n)=\{\sum_{n\geq 1}a_n\varphi_n;\quad \sum_{n\geq
1}|a_n|^2\frac{1}{\|\mathcal{E}_n\|^2_{L^2}}<\infty\}
\end{equation}
we assume that we have
\begin{equation}\label{condb1}
\quad \mathrm{Im}\, B\subset X(\omega_n),
\end{equation}
$$\mathcal{A}(\lambda)=\left[
\begin{array}{cc}
  \lambda-A & B \\
  B^* & 0 \\
\end{array}
\right]:X(\omega_n)\times U\longrightarrow X\times U,$$ and that
$B^*$ has a uniformly bounded right inverse.\\ If for
$Q=\lambda-A,\quad
|\left|Q_{\mathrm{Im}\,B}|\right|_{\mathcal{L}(X)}=\mathcal{O}(1)$,
then
\begin{equation*}
\mathcal{A}(\lambda)\left[
\begin{array}{c}
  u \\
  u_- \\
\end{array}
\right]=\left[
\begin{array}{c}
  v \\
   v_+\\
\end{array}
\right]
\end{equation*}gives that
\begin{equation}\label{htest11}
\|v\|^2_X+\|v_+\|^2_U\geq C(\|u\|^2_X+\|u_-\|^2_U).
\end{equation}

\end{prp}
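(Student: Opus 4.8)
The plan is to observe that this Proposition is simply Theorem \ref{princ} with the renormalized Hilbert space $X(\omega_n)$ of \eqref{renrm} put in place of $D(A)$, and to rerun the proof of Theorem \ref{princ} essentially verbatim, checking only that the renormalization disturbs none of the estimates. First I would note that hypothesis \eqref{condb1}, $\mathrm{Im}\,B\subset X(\omega_n)$, plays exactly the role of \eqref{condb} in Theorem \ref{princ}: it is the only place in the argument where the source space of $\mathcal{A}(\lambda)$ enters, and it ensures that $\mathcal{A}(\lambda):X(\omega_n)\times U\longrightarrow X\times U$ is well defined. Then I would emphasise that all inner products, orthogonal projections and operator norms below are taken in $X$ and $U$ — never in the renormalized space — so that the orthogonal projection $\Pi:X\longrightarrow(\mathrm{ker}\,B^*)^\perp=\mathrm{Im}\,B$ and the uniformly bounded right inverse $P_+:U\longrightarrow\mathrm{Im}\,B$ of $B^*$ are available unchanged.

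With this in hand I would carry out the three steps of the proof of Theorem \ref{princ}. From the first component $(\lambda I-A)u+Bu_-=v$ and $(I-\Pi)B=0$ I would get $\|(I-\Pi)u\|_X^2\le|\langle(I-\Pi)v-(I-\Pi)(\lambda-A)\Pi u,(I-\Pi)u\rangle|$; then the second component $B^*u=v_+$ gives $\Pi u=P_+v_+$, and the hypothesis $\|(\lambda-A)|_{\mathrm{Im}\,B}\|_{\mathcal{L}(X)}=\mathcal{O}(1)$ yields $\|(\lambda-A)\Pi u\|_X=\mathcal{O}(1)\|v_+\|_U$, hence $\|(I-\Pi)u\|_X\le\|v\|_X+\mathcal{O}(1)\|v_+\|_U$ and $\|\Pi u\|_X\le C\|v_+\|_U$. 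Finally, with $P_-=P_+^*$ (uniformly bounded, and vanishing on $(\mathrm{Im}\,B)^\perp$, so $P_-=P_-\Pi$), I would invert the first component as $u_-=P_-(v-(\lambda-A)u)$, split $u=\Pi u+(I-\Pi)u$, bound the resulting terms by $\|v\|_X$, $\|v_+\|_U$ and $\|(I-\Pi)u\|_X$ to obtain $\|u_-\|_U\le C(\|v\|_X+\|v_+\|_U)$, and add the squares of this and of $\|u\|_X\le\|\Pi u\|_X+\|(I-\Pi)u\|_X\le C(\|v\|_X+\|v_+\|_U)$ to reach \eqref{htest11}.

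The step I expect to be the main obstacle is the one that is already delicate in Theorem \ref{princ}: controlling the off-diagonal contribution $P_-\Pi(\lambda-A)(I-\Pi)u$ by $\|(\lambda-A)|_{\mathrm{Im}\,B}\|_{\mathcal{L}(X)}\,\|(I-\Pi)u\|_X$, i.e. knowing that in the block decomposition of $\lambda-A$ along $X=\mathrm{Im}\,B\oplus(\mathrm{Im}\,B)^\perp$ the row indexed by $\mathrm{Im}\,B$ is governed by the same $\mathcal{O}(1)$ quantity; for skew-adjoint $A$ and imaginary $\lambda$ this comes from $\Pi(\lambda-A)(I-\Pi)=-((I-\Pi)(\lambda-A)\Pi)^*$. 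The reassurance I would give is that, since $X(\omega_n)$ is $\mathbb{T}$-invariant and $\mathrm{Im}\,B\subset X(\omega_n)$, the operator $(\lambda-A)|_{\mathrm{Im}\,B}$ and hence this estimate are left untouched by passing to the renormalized space, so the entire argument of Theorem \ref{princ} transfers without change.
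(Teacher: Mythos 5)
Your proposal matches the paper exactly in spirit: the paper's entire proof of this proposition is the single sentence ``The proof is the same as Theorem \ref{princ}.'' You simply carry out that transfer explicitly, checking that the renormalized space only enters through the hypothesis $\mathrm{Im}\,B\subset X(\omega_n)$ and that all estimates are taken in $X$ and $U$, which is a correct and somewhat more careful rendering of the same argument.
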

\vskip 0.15cm
\begin{proof}
The proof is the same as Theorem \ref{princ}.
\end{proof}

\end{document}